\documentclass[twoside,a4paper]{article}

%%%%---- Packages ----%%%%
\usepackage{amsmath}
\usepackage{amsfonts}
\usepackage{amsthm}
\usepackage{enumitem}
\usepackage{url}

%%%%---- Enviroments ----%%%%
% adjust in final version

\theoremstyle{definition}
\newtheorem{definition}{Definition}[section]
\newtheorem{example}[definition]{Example}

\theoremstyle{plain}
\newtheorem{lemma}[definition]{Lemma}
\newtheorem{theorem}[definition]{Theorem}
\newtheorem{proposition}[definition]{Proposition}
\newtheorem{corollary}[definition]{Corollary}

\theoremstyle{remark}
\newtheorem{remark}[definition]{Remark}

%%%%---- Symbols ----%%%%
\newcommand{\R}{\mathbb{R}}                             % ---- reals
\newcommand{\inner}[2]{\left\langle #1,#2\right\rangle} % ---- inner product

%%%%---- Function spaces ----%%%%

\DeclareMathOperator{\LspaceSymbol}{\mathbf{L}} % L space symbol
\newcommand{\Lpspace}[1][p]{\LspaceSymbol^{{#1}}}
\newcommand{\LGspace}[1][\DefaultGfunction]{\LspaceSymbol^{{#1}}}
\newcommand{\LGastspace}[1][\DefaultGfunction]{\LspaceSymbol^{{#1^\star}}}

\DeclareMathOperator{\EspaceSymbol}{\mathbf{E}} % E space symbol
\newcommand{\EGspace}[1][\DefaultGfunction]{\EspaceSymbol^{#1}}

\DeclareMathOperator{\WspaceSymbol}{\mathbf{W}} % W space symbol

\newcommand{\WLGspace}[1][\DefaultGfunction] { {\WspaceSymbol^1}\LspaceSymbol^{{#1}} }
\newcommand{\WLGTspace}[1][\DefaultGfunction]{ {\WspaceSymbol^1_T}\LspaceSymbol^{{#1}}}
\newcommand{\WTLGtildespace}[1][\DefaultGfunction]{{\widetilde{\WspaceSymbol}^1_T}\LspaceSymbol^{{#1}}}

%%%%---- norms ----%%%%
\newcommand{\norm}[1]{\|#1\|}

\newcommand{\LGnorm}[2][\DefaultGfunction]{\norm{#2}_{\LGspace[{#1}]}}
\newcommand{\LGastnorm}[2][{\DefaultGfunction}]{\norm{#2}_{\LGastspace[{#1}]}}
\newcommand{\WLGnorm}[2][\DefaultGfunction]{\norm{#2}_{\WLGspace[{#1}]}}

%%%%---- hamiltonians ----%%%%
\newcommand{\Hcal}{\mathcal{H}}
\newcommand{\Hcalast}{\Hcal^\star}
\newcommand{\Lcal}{\mathcal{L}}

%%%%---- Page layout ----%%%%
% adjust in final version

\usepackage[margin=2.0cm,marginpar=1.5cm]{geometry}

\usepackage{fancyhdr}
\pagestyle{fancy}
\fancyhead{}
\fancyfoot{}
\fancyhead[CE]{\nouppercase{S. Acinas, J. Maksymiuk and F. Mazzone}}
\fancyhead[CO]{\nouppercase{\section}}
\fancyhead[CO]{\nouppercase{\leftmark}}

%%%%%%%% -------- REMOVE IN FINAL VERSION -------- %%%%%%%%
%%%%%%%% -------- REMOVE IN FINAL VERSION -------- %%%%%%%%
%%%%%%%% -------- REMOVE IN FINAL VERSION -------- %%%%%%%%
%%%%%%%% -------- REMOVE IN FINAL VERSION -------- %%%%%%%%
\usepackage[textsize=small,backgroundcolor=white,linecolor=black]{todonotes}

%%%%%%%%%%%%---- END OF REMOVE IN FINAL VERSION ----%%%%%%%%%%%%%%%%
%%%%%%%%%%%%---- END OF REMOVE IN FINAL VERSION ----%%%%%%%%%%%%%%%%
%%%%%%%%%%%%---- END OF REMOVE IN FINAL VERSION ----%%%%%%%%%%%%%%%%
%%%%%%%%%%%%---- END OF REMOVE IN FINAL VERSION ----%%%%%%%%%%%%%%%%

%%%%---- Title and authors ----%%%%

\title{Clarke duality for Hamiltonian systems with nonstandard growth}
\author{%(In alphabetical order)\\begin{equation*}3mm]
Sonia Acinas \thanks{SECyT-UNRC,  FCEyN-UNLPam and UNSL}\\
Dpto. de Matem\'atica, Facultad de Ciencias Exactas y Naturales\\
Universidad Nacional de La Pampa\\
(L6300CLB) Santa Rosa, La Pampa, Argentina\\
\url{sonia.acinas@gmail.com}\\
Jakub Maksymiuk \\
Faculty of Applied Physics and Mathematics, Gdansk University of Technology,\\
Narutowicza 11/12, 80-233, Gdansk, Poland \\
\url{jakub.maksymiuk@pg.edu.pl}\\
 Fernando D. Mazzone \thanks{SECyT-UNRC, FCEyN-UNLPam and CONICET}\\
Dpto. de Matem\'atica, Facultad de Ciencias Exactas, F\'{\i}sico-Qu\'{\i}micas y Naturales\\
Universidad Nacional de R\'{i}o Cuarto\\
(5800) R\'{\i}o Cuarto, C\'ordoba, Argentina,\\
\url{fmazzone@exa.unrc.edu.ar} \\
}
\date{}

%%%%---- Text body ----%%%%
\begin{document}

\maketitle

\begin{abstract}
We consider the existence of periodic solutions to  Hamiltonian Systems with  growth conditions involving G-function. We introduce the notion of symplectic G-function and provide relation for the growth of Hamiltonian in terms of certain constant $C_G$ associated to symplectic G-function $G$. We discuss an optimality of this constant for some special cases. We also provide an applications to the $\Phi$-laplacian type systems.
\end{abstract}

\begingroup%Locallizing the change to `thefootnote'.
\renewcommand{\thefootnote}{}%Removing the footnote symbol.
\footnotetext{\textbf{2010  AMS Subject Classification.} Primary: 34C25 Secondary: 34B15}
\footnotetext{\textbf{Keywords and phrases:} periodic solutions, Orlicz spaces, Euler-Lagrange equation, critical points}
\endgroup

%%%%%%%%%%%%%%%%%%%%%%%%%%%%%%%%%%%%%%%%%%%%%%%%%%%%%%%%%%%%%%%%%%%%%%%%%%%%%%%%
%                                                                              %
%          Introduction                                                        %
%                                                                              %
%%%%%%%%%%%%%%%%%%%%%%%%%%%%%%%%%%%%%%%%%%%%%%%%%%%%%%%%%%%%%%%%%%%%%%%%%%%%%%%%

\section{Introduction}
In this paper we consider the problem of existence of periodic solutions to the Hamiltonian system
\begin{equation}\label{eq:ham-sis-qp-II}
J\dot{u}=- \nabla \Hcal(t,u(t))
\end{equation}
where the \emph{Hamiltonian} $\Hcal$ is in $C^1([0,T]\times \R^{2n}, \R)$,   $u:[0,T]\to\R^{2n}$ and $J$ denotes the canonical symplectic matrix
\begin{equation*}
J=
\begin{pmatrix}
0_{n\times n}&I_{n\times n}
\\
-I_{n\times n}&0_{n\times n}
\end{pmatrix}
\end{equation*}
Our work is motivated by the book by J. Mawhin and M. Willem \cite{mawhin2010critical} and by the paper by Y. Tian and W. Ge \cite{TiaGe07}.
In \cite[Theorem 3.1]{mawhin2010critical} the authors assume a quadratic growth condition on $\Hcal$:
\begin{equation*}
\Hcal(t,u)\leq \frac{\alpha}{2}|u|^2+\gamma(t),
\end{equation*}
where $\alpha\in (0,2\pi/T)$, $\gamma\in \Lpspace[2]$, and a coercivity condition
$
\lim_{u\to \infty}\frac{1}{T}\int_0^T\Hcal(t,u)\,dt=\infty.
$
Then they obtained, using Clarke dual action method, existence of a T-periodic solution to the equation \eqref{eq:ham-sis-qp-II}. This result is further applied to show existence of periodic solution to the classical Lagrangian system (see \cite[Theorem 3.5]{mawhin2010critical}).

These results was extended in \cite{TiaGe07}, where the same methods are applied to the Hamiltonians of the following form
\begin{equation}\label{eq:hamil_tian}
 \Hcal(t,u)=\frac{1}{a}F(t,u_1)+\frac{a^{q-1}}{q}|u_2|^q,\ \text{$u=(u_1,u_2)$ and $a>0$.}
\end{equation}
The Authors also consider Lagrangian systems. In fact, solutions corresponding to this particular Hamiltonian provide solutions of the p-laplacian equation:
\begin{equation*}
\frac{d}{dt} ( |\dot{u}_1|^{p-2}\dot{u}_1) + \nabla F(t,u_1)=0,\ \frac1p+\frac1q=1.
\end{equation*}
Among other conditions, they assume that $F:[0,T]\times\mathbb{R}^n\to\mathbb{R}$ satisfies the following growth conditions.
There exists $l\in \LGspace[{2\max\{q,p-1\}}]([0,T],\mathbb{R}^n)$ such that
\begin{equation}\label{eq:A1}
 F(t,y)\geq \inner{l}{ |y|^{\frac{p-2}{2}}y},\quad y\in\mathbb{R}^n,\,\text{a.e. } t\in[0,T],\tag{A1}
\end{equation}
and there exists $0<a <\min\{T^{-\frac{p}{q}}, T^{-1}\}$ and $\gamma\in \LGspace[{\max\{q,p-1\}}]([0,T])$ such that
\begin{equation}\label{eq:A2}
 F(t,y)\leq \frac{a^2}{p}|y|^p+\gamma(t),\quad y\in\mathbb{R}^n,\,\text{a.e. } t\in[0,T],\tag{A2}
\end{equation}

The objective of this paper is to extend these results. Our main result, Theorem \ref{thm:solution-ham}, establish existence of solutions for equation \eqref{eq:ham-sis-qp-II} to the case of Hamiltonians with an anisotropic growth conditions given by a G-function $G$. We will seek for solutions in anisotropic Orlicz-Sobolev space (see Section \ref{sec:auxiliary}).

This result improve the results of \cite{TiaGe07} in several directions. First, using anisotropic $G$-functions we can consider more general growth conditions. In particular, we allow $\Hcal$ to have different growth in different directions. Moreover, we do not assume that Hamiltonians have any particular structure like $H(t,u)=H_1(t,u_1)+H_2(t,u_2)$.

Our theorem also improve results of \cite[Theorem 2.1]{TiaGe07}, when Theorem \ref{thm:solution-ham} is applied to the Hamiltonian of the form \eqref{eq:hamil_tian} it provides better result (see Remarks \ref{rem:comp_tian} and \ref{rem:condH1'}).

The method used in \cite{mawhin2010critical,TiaGe07} and in the present paper involves the Clarke dual action functional. It is shown that the critical points of the dual action gives solutions to the problem \eqref{eq:ham-sis-qp-II}. The Clarke duality was introduced in 1978 by F. Clarke \cite{Clarke-1979}, and it was developed by F. Clarke and I. Ekeland in \cite{ Fran-1981, Fran-1982,Iva-1979,book:971200}, to overcome the difficulty that appear when the Hamiltonian action is indefinite. In \cite{Fran-1980}, the Clarke duality was applied to prove some result on the famous Rabinowitz conjecture.

To obtain existence result, we need to prove that the dual action for a perturbed problem with  associated Hamiltonian $\Hcal_{\varepsilon}$, $\varepsilon>0$ small enough, is differentiable and coercive. To do this we introduce in Section \ref{sec:symplectic} the notion of symplectic and semi-symplectic G-function. We show in Section \ref{sec:differentiability} that if the Hamiltonian satisfies
\begin{equation*}
G(\lambda u)-\beta(t)\leq \Hcal (t,u)\leq G(\Lambda u)+\gamma(t),
\end{equation*}
then the associated dual action functional is differentiable on the anisotropic Orlicz-Sobolev space $\WLGspace[G^\star]$, where $G^\star$ denotes the convex conjugate of $G$.

To show that perturbed dual action is coercive we need estimates for the quadratic form
$\int_0^T\inner{J\dot u}{v}\, dt$. We show in Section \ref{sec:symplectic} that for semi-symplectic function $G$ this quadratic form is bounded on Orlicz-Sobolev space $\WLGTspace$ and that
\begin{equation*}
\int_0^T\inner{J\dot u}{u}\, dt \geq -C_1\int_0^T G(T\dot{u})\, dt-C_2
\end{equation*}
on $\WLGTspace$.

It turns out that the constant $C_1$ is related to the growth condition on Hamiltonian that we consider in Theorem \ref{thm:solution-ham}:
\begin{equation*}
\Hcal(t,u)\leq G(\Lambda u)+\gamma(t),
\end{equation*}
where $\Lambda^{-1}> T \max\{1,C_{G}\}$ and $\gamma\in \Lpspace[1]$. Namely, the smaller value of $C_1$ gives the wider class of Hamiltonians we can consider. Therefore, it is important to determine the optimal value for $C_1$ (we denote it by $C_{G}$). We show that this optimal value is related to certain constrained optimization problem and we obtain the optimal value for $C_{G}$ in some simple cases. In Section \ref{sec:symplectic} we also discuss how the constant $C_{G}$ and the given bound for $\Lambda$ are related to the bounds for $\alpha$ imposed in \cite{mawhin2010critical,TiaGe07}.

This paper is structured as follows. In Section \ref{sec:auxiliary} we present the auxiliary results. We briefly recall the notion of G-function and Orlicz-Sobolev spaces. In Section \ref{sec:symplectic} we introduce the concept of symplectic G-function and we study some properties. The main result about symplectic G-function is Theorem \ref{thm:LowBoundQuadraticForm} which establishes boundedness of certain canonical quadratic functional. In Section \ref{sec:differentiability}, we discuss  differentiability of the dual action. In Section \ref{sec:existence}, we present our main result,  which establishes existence of periodic solutions for Hamiltonian system. Finally, in Section \ref{sec:applic} we apply the previous results to the problem of existence solutions for certain second order systems.

%%%%%%%%%%%%%%%%%%%%%%%%%%%%%%%%%%%%%%%%%%%%%%%%%%%%%%%%%%%%%%%%%%%%%%%%%%%%%%%%
%                                                                              %
%          Auxilary results                                                    %
%                                                                              %
%%%%%%%%%%%%%%%%%%%%%%%%%%%%%%%%%%%%%%%%%%%%%%%%%%%%%%%%%%%%%%%%%%%%%%%%%%%%%%%%
\section{Auxiliary results}
\label{sec:auxiliary}

In this section we collect some auxiliary results. First, we briefly recall some facts concerning convex function. Next, we will be concerned with the notion of G-function and Orlicz spaces. We refer the reader to \cite{HirLem01,mawhin2010critical} for more comprehensive information about convex functions and to \cite{MA2017,BarCia17,chamra2017anisotropic,Sch05,trudinger1974imbedding} for more information on anisotropic G-functions and Orlicz spaces.

%%%% -------- General Convex -------- %%%%%%%%

\subsection{Convex functions}

Recall that for arbitrary convex function $G\colon \R^n\to \R$ the convex conjugate of $G$ is defined by
\begin{equation*}
G^\star\colon \R^n\to (-\infty,\infty],\quad    G^\star(v)=\sup_{u\in \R^n}\{\inner{u}{v}-G(u)\}.
\end{equation*}

In general, $G^\star$ need not be finite. Assuming $\lim_{|u|\to \infty}\frac{G(u)}{|u|}=\infty$ we get $G^\star<+\infty$. Immediately from the definition of $G^\star$ we get:
\begin{itemize}
\item $G_1\leq G_2 \implies G_2^\star\leq G_1^\star$,
\item $F(u)=a G(bu)-c \implies F^\star(v)=aG^\star(v/ab)+c$, where $a,b>0$ and $c \in \R$,
\item \emph{Fenchel's inequality:}
\begin{equation*}
    \inner{u}{v}\leq G(u)+G^\star(v),
\end{equation*}
\item let $G_i\colon \R^{n_i}\to \R$, $i=1,2$, be continuous convex functions. Define $F\colon
\R^{n_1}\times\R^{n_2}\to \R$ by
\begin{equation*}
    F(u)=F(u_1,u_2)=G_1(u_1)+G_2(u_2)
\end{equation*}
then
\begin{equation*}
    F^\star(v)=F^\star(v_1,v_2)=G^\star_1(v_1)+G^\star_2(v_2),
\end{equation*}
where the inner product in $\R^{n_1}\times\R^{n_2}$ is taken as the sum of inner products in components,

\item  if $G$ is a differentiable convex function, then
\begin{equation}
    \label{ineq:G_by_gradientG}
    G(u_1)-G(u_1-u_2)\leq \inner{\nabla G(u_1)}{u_2}\leq G(u_1+u_2)-G(u_1)
\end{equation}
\item \emph{Young's identity:} if $G$ is a differentiable convex function, then
\begin{equation}
\label{eq:Fenchel:equality}
    \inner{\nabla G(u)}{u}=G(u)+G^\star(\nabla G(u)).
\end{equation}
\end{itemize}

\begin{definition}
Let us define $\Gamma(\R^n)$ to be the set of all differentiable, strictly convex functions $G\colon \R^n\to \R$ such that
\begin{equation}\label{eq:cond-G-infinity}
\lim_{|u|\to \infty}\frac{G(u)}{|u|}=\infty.
\end{equation}
\end{definition}
It is well known that if $G$ is in  $\Gamma(\R^n)$ then its convex conjugate is also in $\Gamma(\R^n)$. Moreover, in this case relation $\nabla G^\star = (\nabla G)^{-1}$ holds.
The next lemma is a generalization of Proposition 2.2 from \cite{mawhin2010critical}.

\begin{proposition}\label{prop: cota-conj-phi}
Let $H\colon \R^n\to \R$ be a differentiable convex function. Assume that there exists a convex function $G\colon \R^n\to \R$ satisfying
\eqref{eq:cond-G-infinity} and constants $\beta$, $\gamma>0$ such that
\begin{equation}\label{eq:cotas-F-aniso}
-\beta \leq H(u) \leq G(u)+\gamma, \mbox{ for all } u \in \R^n.
\end{equation}
Then  for any $r>1$
\begin{equation}\label{eq:trans-grad-aniso}
G^\star(\nabla H (u))\leq \frac{1}{r-1}G(r u)+\frac{r}{r-1}\left(\beta+\gamma\right).
\end{equation}
\end{proposition}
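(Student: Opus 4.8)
The plan is to bound $G^\star(\nabla H(u))$ by first trading the conjugate of $G$ for the conjugate of $H$, then applying Young's identity \eqref{eq:Fenchel:equality} to $H$, and finally controlling the resulting term $\inner{\nabla H(u)}{u}$ through a judicious use of the gradient inequality \eqref{ineq:G_by_gradientG}.

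First I would start from the definition of the conjugate, $G^\star(\nabla H(u)) = \sup_{w}\{\inner{w}{\nabla H(u)} - G(w)\}$, and use the upper bound in \eqref{eq:cotas-F-aniso} in the form $G(w) \geq H(w) - \gamma$ to obtain
\[
G^\star(\nabla H(u)) \leq \sup_w\{\inner{w}{\nabla H(u)} - H(w)\} + \gamma = H^\star(\nabla H(u)) + \gamma .
\]
(Equivalently, one may invoke the order-reversing property of conjugation together with the translation rule $(G+\gamma)^\star = G^\star - \gamma$ recorded in the second bullet above.) Next, since $H$ is differentiable and convex, Young's identity \eqref{eq:Fenchel:equality} gives $H^\star(\nabla H(u)) = \inner{\nabla H(u)}{u} - H(u)$, and the lower bound $-\beta \leq H(u)$ then yields
\[
G^\star(\nabla H(u)) \leq \inner{\nabla H(u)}{u} + \beta + \gamma .
\]

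The key step is the estimate for $\inner{\nabla H(u)}{u}$. Applying the right-hand inequality in \eqref{ineq:G_by_gradientG} to $H$ with the choice $u_2 = (r-1)u$ gives $(r-1)\inner{\nabla H(u)}{u} \leq H(ru) - H(u)$; using $H(ru) \leq G(ru) + \gamma$ and $-H(u) \leq \beta$ from \eqref{eq:cotas-F-aniso}, this becomes $\inner{\nabla H(u)}{u} \leq \frac{1}{r-1}\bigl(G(ru) + \beta + \gamma\bigr)$. Substituting into the previous display and collecting the constants via $\frac{1}{r-1} + 1 = \frac{r}{r-1}$ produces exactly \eqref{eq:trans-grad-aniso}.

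The argument is essentially a short chain of elementary convex-analytic inequalities, so there is no serious analytic obstacle; the only genuinely clever point is the choice $u_2 = (r-1)u$ in the gradient inequality, which is precisely what makes the factor $\frac{1}{r-1}$ and the scaled argument $ru$ inside $G$ appear simultaneously and match the claimed bound. I would also note that the superlinear growth condition \eqref{eq:cond-G-infinity} guarantees $G^\star < +\infty$, so the supremum defining $G^\star(\nabla H(u))$ is finite and all of the above manipulations are legitimate.
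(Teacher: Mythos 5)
Your proof is correct, but the key estimate is obtained by a genuinely different mechanism than the paper's. Both arguments open identically: conjugate the upper bound in \eqref{eq:cotas-F-aniso} to get $G^\star(\nabla H(u)) \leq H^\star(\nabla H(u)) + \gamma$, apply Young's identity \eqref{eq:Fenchel:equality}, and use $-H(u)\leq \beta$, reducing the problem to bounding $\inner{\nabla H(u)}{u}$. At that point the paper applies Fenchel's inequality to the pair $\nabla H(u)$ and $ru$, obtaining $\inner{\nabla H(u)}{u} \leq \frac{1}{r}G^\star(\nabla H(u)) + \frac{1}{r}G(ru)$, and then \emph{absorbs} the term $\frac{1}{r}G^\star(\nabla H(u))$ into the left-hand side; this self-referential rearrangement is legitimate because the first step already shows $G^\star(\nabla H(u))$ is finite. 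You instead apply the gradient inequality \eqref{ineq:G_by_gradientG} to $H$ with $u_2=(r-1)u$, getting $(r-1)\inner{\nabla H(u)}{u}\leq H(ru)-H(u)$, and then invoke the hypothesis \eqref{eq:cotas-F-aniso} a second time to replace $H(ru)$ by $G(ru)+\gamma$ and $-H(u)$ by $\beta$. Your route is a one-directional chain with no absorption step, so you never manipulate an inequality containing the quantity being estimated on both sides; the price is that you use each bound in \eqref{eq:cotas-F-aniso} twice rather than once, and you rely on convexity of $H$ (through \eqref{ineq:G_by_gradientG}) rather than on the Fenchel inequality for the pair $G$, $G^\star$. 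Both routes produce exactly the same constants $\frac{1}{r-1}$ and $\frac{r}{r-1}$, so nothing is lost either way.
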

\begin{proof}
Conjugating \eqref{eq:cotas-F-aniso} and using \eqref{eq:Fenchel:equality}, we obtain
\begin{equation*}
G^\star(\nabla H(u))-\gamma \leq H^\star(\nabla H(u))=\inner{\nabla H(u)}{u}-H(u).
\end{equation*}
From Fenchel's inequality, we get
\begin{equation*}
\inner{\nabla H(u)}{u} =
\frac{1}{r} \inner{\nabla H(u)}{r u}
\leq
\frac{1}{r}G^\star(\nabla H(u))+\frac{1}{r}G(r u).
\end{equation*}
Combining the above inequalities and \eqref{eq:cotas-F-aniso} we obtain
\begin{equation*}
G^\star(\nabla H(u)) \leq \frac{1}{r}G^\star(\nabla H(u))+\frac{1}{r}G(r u) + \beta+\gamma,
\end{equation*}
which implies \eqref{eq:trans-grad-aniso}.
\end{proof}

\begin{remark}
\label{com:mejora_desi}
Inequality \eqref{eq:trans-grad-aniso} for the case of the power function $G(u)=|u|^q $ is slightly
better than the corresponding inequality in \cite[Proposition 2.2]{mawhin2010critical} where the estimate $|\nabla H(v)|\leq C(|u|+\beta+\gamma+1)^{q-1}$ is obtained. Here we obtain $|\nabla H(v)|\leq C(|u|^q+\beta+\gamma)^{(q-1)/q}$. This simple fact allows us in forthcoming results to use less restrictive hypothesis on certain functions.
\end{remark}

%%%%%%%% -------- G-functions and Orlicz spaces -------- %%%%%%%%

\subsection{G-functions and Orlicz spaces}

\begin{definition}
A function $G\colon \R^n\to [0,+\infty)$ is called a G-function if $G$ is convex and satisfies $G(x)=0\iff x=0$, $G(-u)=G(u)$ and $\lim_{|u|\to\infty}\frac{G(u)}{|u|}=\infty$.
\end{definition}
It follows that the convex conjugate of a G-function is also a G-function.

\begin{proposition}
\label{prop:elem_ine}
Let $G$ be a G-function. Then, for every $u\in \R^n$ we have
\begin{align*}
0<s_1\leq s_2 &\implies s_2G(u/s_2)\leq s_1G(u/s_1),\\
0<s_1, s_2 &\implies G(s_1u)+G(s_2u)\leq G((s_1+s_2)u).
\end{align*}
\end{proposition}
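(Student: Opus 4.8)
The plan is to derive both inequalities from a single structural feature of any G-function: convexity together with the normalization $G(0)=0$. The recurring device is to write the argument appearing on the smaller side as a convex combination in which the zero vector carries the remaining weight, so that the term $G(0)$ simply drops out. Neither the evenness nor the superlinear growth of $G$ will be needed.

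For the first inequality, fix $u\in\R^n$ and $0<s_1\le s_2$, and set $\lambda=s_1/s_2\in(0,1]$. Since $\lambda(u/s_1)=u/s_2$, I would write $u/s_2=\lambda(u/s_1)+(1-\lambda)\cdot 0$ and apply convexity to obtain $G(u/s_2)\le \lambda G(u/s_1)+(1-\lambda)G(0)=\tfrac{s_1}{s_2}G(u/s_1)$. Multiplying through by $s_2$ yields $s_2G(u/s_2)\le s_1G(u/s_1)$, which is exactly the claim; equivalently, this shows that the perspective map $s\mapsto sG(u/s)$ is non-increasing on $(0,\infty)$.

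For the second inequality, fix $u\in\R^n$ and $s_1,s_2>0$ and put $s=s_1+s_2$. I would decompose each argument as $s_iu=\tfrac{s_i}{s}(su)+\bigl(1-\tfrac{s_i}{s}\bigr)\cdot 0$ for $i=1,2$ and use convexity to get $G(s_iu)\le \tfrac{s_i}{s}G(su)$. Adding the two resulting inequalities and using $s_1+s_2=s$ produces $G(s_1u)+G(s_2u)\le G((s_1+s_2)u)$, the desired superadditivity along the ray through $u$.

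There is essentially no obstacle here beyond selecting the correct convex combination: in both parts the weight placed on the origin must be chosen so that $G(0)=0$ eliminates the spurious term and leaves the stated homogeneity/superadditivity bound. The only point deserving a moment's care is checking that the multipliers $\lambda$ and $s_i/s$ indeed lie in $[0,1]$ under the stated hypotheses, which is immediate from $s_1\le s_2$ and $s_1,s_2>0$ respectively.
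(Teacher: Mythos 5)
Your proof is correct: both inequalities follow exactly as you argue, from convexity together with $G(0)=0$ (which holds since $G(x)=0\iff x=0$), by writing the smaller-side argument as a convex combination with the origin. The paper itself omits the proof as ``straightforward,'' and your argument is precisely the standard one being alluded to, so there is nothing to add or repair.
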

The proof are straightforward.
Immediately from the Fenchel inequality we get that for every $\mu,\nu>0$ and every $u,v\in\R^n$
\begin{equation}
    \label{ineq:generalFenshel}
    -\mu\nu\,G(u/\mu)-\mu\nu\,G^\star(v/\nu)
    \leq \inner{u}{v}\leq
    \mu\nu\,G(u/\mu)+\mu\nu\,G^\star(v/\nu),
\end{equation}
since $G(-u)=G(u)$.

We say that  G-function $G$ satisfies the  \emph{$\Delta_2$-condition} (denoted $G \in \Delta_2$), if there exists a constant $C>0$  such that for every $u\in\R^n$
\begin{equation}\label{eq:delta2defi}
G(2u)\leq C G(u)+1.
\end{equation}
Note that this definition is equivalent that the traditional one, i.e. that there exixts $r_0,C>0$ with $G(2u)\leq C G(u)$ for $|u|>r_0$. If there exists $C>0$ such that $G(2u)\leq CG(u)$ for all $u\in\R^{n}$, then we say that $G$ satisfies the \emph{$\Delta_2$-condition globally}.

Recall that $G_1\prec G_2$ if there exist $K>0$ and $C\geq 0$ such that $G_1(u)\leq G_2(Ku)+C$, for every $u\in\R^n$. Directly from the definition, if $G_1\prec G_2$ then $G^\star_2\prec G^\star_1$.

Let $G$ be a G-function. The Orlicz space $\LGspace=\LGspace([0,T],\R^n)$ is defined to be
\begin{equation*}
\LGspace = \left\{u\colon [0,T]\to \R^n\colon \text{ $u$-measurable },
\exists\lambda>0\, \int_0^T G(\lambda u) \,dt<\infty\right\}.
\end{equation*}
The space $\LGspace$ equipped with the Luxemburg norm
\begin{equation*}
\LGnorm{u}=\inf\left\{\lambda>0\colon \int_0^T G(u/\lambda) \,dt \leq 1\right\}
\end{equation*}
is a Banach space. Observe that
\begin{equation*}\label{eq:lowbound_for_modular_by_norm}
\LGnorm{u}>1 \implies \int_0^T G(u) \,dt \geq \LGnorm{u}
\end{equation*}
and therefore for any $u\in\LGspace$
\begin{equation}\label{eq:lowbound_for_modular_by_norm2}
 \LGnorm{u}\leq \int_0^T G(u) \,dt +1.
\end{equation}
A generalized form of Holder's inequality holds
\begin{equation*}\label{ineq:Holder}
\int_0^T \inner{u}{v} \,dt \leq 2 \LGnorm{u}\LGastnorm{v},\quad u\in \LGspace, v\in \LGastspace.
\end{equation*}

The subspace $\EGspace=\EGspace([0,T],\R^n)$ is defined to be the closure of $\Lpspace[\infty]$ in $\LGspace$. The equality $\EGspace=\LGspace$ holds if and only if $G\in \Delta_2$. It is known that $\EGspace[G^\star]$ is separable and $\LGspace = \left(\EGspace[G^\star]\right)^\star$. Hence $\LGspace$ can be equipped with weak$\star$ topology induced from $\EGspace[G^\star]$.

We define the anisotropic Orlicz-Sobolev space of vector valued functions $\WLGspace=\WLGspace([0,T],\R^n)$ by
\begin{equation*}
\WLGspace=\{u\in \LGspace\colon \dot{u} \text{ absolutely continuous and } \dot{u}\in \LGspace\}.
\end{equation*}
The space $\WLGspace$ is a Banach space when equipped with the norm
\begin{equation*}
\WLGnorm{u}=\LGnorm{u}+\LGnorm{\dot{u}}.
\end{equation*}

As usual, for a function $u\in \Lpspace[1]([0,T],\R^n)$ we will write $u=\widetilde{u}+\overline{u}$, where $\overline{u}=\frac{1}{T} \int_0^T u \,dt$. One can show that
\begin{equation}\label{eq:WLG_equivalent_norm}
\WLGnorm{u}'=|\overline{u}|+\LGnorm{\dot{u}}
\end{equation}
is an equivalent norm to $ \WLGnorm{\cdot}$ (see \cite[Remark 1]{MA2017}). We set $\WLGTspace:=\left\{u\in \WLGspace: u(0)=u(T)\right\}$ and $\WTLGtildespace=\left\{v \in \WLGTspace: \int_0^T v(t)\, dt=0\right\}$.

In the space $\WLGspace$ an anisotropic version of Poincar\'e-Wirtinger inequality holds (see \cite{MA2017} or \cite{chamra2017anisotropic}):
\begin{equation*}
    G(\widetilde{u})\leq \frac{1}{T} \int_0^T G(T\dot{u}) \,dt.
\end{equation*}
Integrating both sides, we get

\begin{equation}
    \label{ineq:conjugateP-W}
    \int_0^T G(\widetilde{u}) \,dt \leq \int_0^T G( T\, \dot{u}) \,dt.
\end{equation}

We will also use the following simple lemma.

\begin{lemma}[{see \cite[Corollary 2.5]{MA2017}}]
\label{lem:bounded_has_unif_conv_seq}
If $u_k$ is a bounded sequence in Orlicz-Sobolev space then $u_k$ has a uniformly convergent
subsequence.
\end{lemma}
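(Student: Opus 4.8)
The plan is to deduce the statement from the classical Arzel\`a--Ascoli theorem in $C([0,T],\R^n)$. Since each $u_k$ is absolutely continuous, it suffices to show that a sequence with $\WLGnorm{u_k}\le M$ is uniformly bounded and equicontinuous. The heart of the matter is to convert the single bound $\LGnorm{\dot u_k}\le M$ into $\Lpspace[1]$-control of the increments $\int_s^t|\dot u_k|\,dt$, and the tool for this is the generalized H\"older inequality together with the fact, recalled above, that $G^\star$ is again a G-function.

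To handle the vector-valued setting I would, for a measurable set $E\subseteq[0,T]$, introduce the test function $v_k=\chi_E\,\dot u_k/|\dot u_k|$ (set to $0$ where $\dot u_k=0$), so that $|v_k|\le 1$ and $\inner{\dot u_k}{v_k}=\chi_E|\dot u_k|$. H\"older's inequality then gives
\[
\int_E |\dot u_k|\,dt=\int_0^T\inner{\dot u_k}{v_k}\,dt\le 2\LGnorm{\dot u_k}\LGastnorm{v_k}\le 2M\,\LGastnorm{v_k},
\]
so that everything reduces to estimating $\LGastnorm{v_k}$ for functions bounded by $1$ and supported on $E$.

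The key estimate is that $\LGastnorm{v}\to 0$ as $|E|\to 0$, uniformly over all $v$ with $|v|\le1$ supported on $E$. This follows directly from the definition of the Luxemburg norm: for any $\lambda>0$ one has $\int_E G^\star(v/\lambda)\,dt\le |E|\,\max_{|w|\le 1/\lambda}G^\star(w)$, and since $G^\star$ is finite, continuous and satisfies $G^\star(0)=0$, this maximum is finite; choosing $\lambda=\varepsilon$ and then $|E|$ small enough forces the modular below $1$, whence $\LGastnorm{v}\le\varepsilon$. Taking $E=[s,t]$ yields equicontinuity, $|u_k(t)-u_k(s)|\le\int_s^t|\dot u_k|\,dt\le 2M\,\omega(|t-s|)$ with a modulus $\omega$ independent of $k$. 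Taking $E=[0,T]$, and applying the same trick with $\LGnorm{u_k}$ to bound $\int_0^T|u_k|\,dt$, and then integrating $|u_k(t)|\le |u_k(s)|+\int_0^T|\dot u_k|\,dt$ over $s\in[0,T]$, gives a uniform bound on $\norm{u_k}_\infty$. Arzel\`a--Ascoli then produces a uniformly convergent subsequence.

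The main obstacle I anticipate is precisely this passage from smallness of the modular to smallness of the norm, carried out uniformly in $k$ --- that is, the equi-integrability statement that $\LGastnorm{v}\to0$ for unit-bounded $v$ supported on shrinking sets --- since it is here that the qualitative properties of $G^\star$ (finiteness and continuity at the origin) must be exploited. The remaining steps are routine once the vector-valued H\"older estimate is set up.
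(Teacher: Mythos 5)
Your proof is correct. Note that the paper itself gives no argument for this lemma --- it is quoted verbatim from \cite[Corollary 2.5]{MA2017} --- so there is no internal proof to compare against; your route (H\"older's inequality plus the observation that $\LGastnorm{v}\to 0$ uniformly for unit-bounded $v$ supported on sets of small measure, then Arzel\`a--Ascoli) is exactly the standard argument behind the cited result, and your unit-vector-field trick $v_k=\chi_E\,\dot u_k/|\dot u_k|$ is a clean way to handle the vector-valued/anisotropic H\"older step, with the key modular-to-norm passage justified correctly from the finiteness and continuity of the G-function $G^\star$.
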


%%%%%%%%%%%%%%%%%%%%%%%%%%%%%%%%%%%%%%%%%%%%%%%%%%%%%%%%%%%%%%%%%%%%%%%%%%%%%%%%
%                                                                              %
%          Symplectic G-functions                                              %
%                                                                              %
%%%%%%%%%%%%%%%%%%%%%%%%%%%%%%%%%%%%%%%%%%%%%%%%%%%%%%%%%%%%%%%%%%%%%%%%%%%%%%%%

\section{Symplectic G-functions}
\label{sec:symplectic}

\begin{definition} We say that a G-function $G\colon \R^{2n}\to [0,\infty)$  is \emph{symplectic} if
$G^\star(Ju)=G(u)$ for all $u\in \R^{2n}$.
\end{definition}

It is obvious that if $G$ is symplectic then $G^\star$ is also symplectic. On the other hand, if a symplectic function satisfies $\Delta_2$-condition then its conjugate also satisfies this condition. Note that if a G-function $G$ is differentiable and symplectic,  then $G^\star$ is also differentiable and
\begin{equation}\label{eq:nabla-sym}
\nabla G(u)= J\nabla G^\star(Ju).
\end{equation}

\begin{definition}
We say that a  G-function $G\colon \R^{2n}\to [0,\infty)$ is \emph{semi-symplectic} if
$G^\star(J\cdot)\prec G$.
\end{definition}

Obviously, every symplectic function is semi-symplectic.

\begin{example} If $\Phi:\R^n\to [0,+\infty)$ is a G-function, then $G(u_1,u_2)=\Phi(u_1)+\Phi^\star(u_2)$ is  symplectic. A typical example of such a function is $G(u_1,u_2)=|u_1|^p+|u_2|^q$, $1/p+1/q=1$.

If $\Phi_1,\Phi_2\colon \R^n\to [0,\infty)$ satisfy $\Phi_1^\star\prec \Phi_2$ then the function of the form
\begin{equation*}
G(u)=\Phi_1(u_1)+\Phi_2(u_2),
\end{equation*}
is semi-symplectic but not necessary symplectic.

A more involved example is provided by $F(u)=G(Au)$, where $G$ is a  symplectic G-function and $A$ is a symplectic matrix, i.e. $A^{-T}J=JA$. In order to prove the symplecticity of $F$, note that $F^\star(v)=G^\star(A^{-T}v)$. Consequently, $F^\star(J\cdot)=G^\star(A^{-T}J\cdot)=G^\star(JA\cdot)=G(A\cdot)=F(\cdot)$. In this way,  we can produce more examples of symplectic G-functions than those given previously. For example,
\begin{equation*}
G(u_1,u_2)=\Phi(u_1+u_2)+\Phi^\star(u_1+2u_2)
\end{equation*}
is a symplectic G-function.
\end{example}

Note that the Orlicz space generated by the function $G(u_1,u_2)=\Phi(u_1)+\Phi^\star(u_2)$ is a product of Orlicz spaces $\LGspace[\Phi]$ and $\LGspace[\Phi^\star]$. This is exactly the case considered in \cite{TiaGe07} (see the definition of the space $X$ therein). However, the Orlicz space corresponding to $G(u_1,u_2)=\Phi(u_1+u_2)+\Phi^\star(u_1+2u_2)$ is not the product of Orlicz spaces (cf. \cite[Example 3.7]{chamra2017anisotropic}).

\begin{proposition}
    \label{prop:embbeding_for_symplectic_G}
If $G$ is semi-symplectic then $J$ induces an embedding
\begin{equation*}
u\mapsto Ju,\quad \LGspace([0,T],\R^{2n})\hookrightarrow \LGastspace([0,T],\R^{2n}).
\end{equation*}
Moreover, for any $K>0$, $C\geq 0$ such that $G^\star(Ju)\leq G(Ku)+C$, for all $u\in \R^{2n}$, we
have
\begin{equation*}
\LGastnorm{Ju}\leq K(C\,T+1)\LGnorm{u}.
\end{equation*}
\end{proposition}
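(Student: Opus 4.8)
The plan is to derive everything from the single pointwise inequality supplied by semi-symplecticity. Since $G^\star(J\cdot)\prec G$, by definition there exist $K>0$ and $C\ge 0$ with $G^\star(Ju)\le G(Ku)+C$ for all $u\in\R^{2n}$, and these are exactly the constants in the statement. I would first dispose of the embedding. Given $u\in\LGspace$ there is $\lambda_0>0$ with $\int_0^T G(\lambda_0 u)\,dt<\infty$; choosing $\lambda_1=\lambda_0/K$ and applying the pointwise bound to $\lambda_1 u$ yields $G^\star(\lambda_1 Ju)=G^\star(J(\lambda_1 u))\le G(\lambda_0 u)+C$, whence $\int_0^T G^\star(\lambda_1 Ju)\,dt\le \int_0^T G(\lambda_0 u)\,dt+CT<\infty$. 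Thus $Ju\in\LGastspace$ and $u\mapsto Ju$ is a well-defined linear map $\LGspace\to\LGastspace$.

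For the quantitative estimate I would exploit the positive homogeneity of the Luxemburg norm together with the sharp modular--norm comparison contained in the implication displayed just before \eqref{eq:lowbound_for_modular_by_norm2}, applied to the G-function $G^\star$: the fact that $\LGastnorm{v}>1$ forces $\int_0^T G^\star(v)\,dt\ge \LGastnorm{v}$ gives, after the trivial case $\LGastnorm{v}\le 1$, the bound $\LGastnorm{v}\le\max\{1,\int_0^T G^\star(v)\,dt\}$ for every $v$. The key idea is to apply this not to $Ju$ directly but to the rescaled function $v=Ju/(K\rho)$, where $\rho=\LGnorm{u}$ (the case $u=0$ being trivial), and then undo the scaling.

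Writing $\LGastnorm{Ju}=K\rho\,\LGastnorm{Ju/(K\rho)}$ by homogeneity, I would bound the modular of $v$ using the semi-symplectic inequality applied to $u/\rho$:
\[
\int_0^T G^\star\!\left(\frac{Ju}{K\rho}\right)dt=\int_0^T G^\star\!\left(J\,\frac{u}{K\rho}\right)dt\le \int_0^T G\!\left(\frac{u}{\rho}\right)dt+CT.
\]
Since $\rho=\LGnorm{u}$, a standard monotone-convergence argument shows $\int_0^T G(u/\rho)\,dt\le 1$, so the modular of $v$ is at most $1+CT$. Feeding this into the comparison inequality gives $\LGastnorm{Ju/(K\rho)}\le\max\{1,1+CT\}=1+CT$, and multiplying back by $K\rho$ produces precisely $\LGastnorm{Ju}\le K(CT+1)\LGnorm{u}$.

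The step I expect to be the crux is the decision to use the \emph{sharp} comparison $\LGastnorm{v}\le\max\{1,\int_0^T G^\star(v)\,dt\}$ rather than the weaker $\LGastnorm{v}\le\int_0^T G^\star(v)\,dt+1$ from \eqref{eq:lowbound_for_modular_by_norm2}. The additive constant $CT$ arising from the semi-symplectic inequality cannot be absorbed by merely rescaling the argument of $G$ (that only shrinks the $G$-part of the modular, never the constant), so a naive ``modular $\le 1$'' argument would both require $CT<1$ and yield the worse constant $K/(1-CT)$. It is the combination of the $\max$-form with the homogeneity rescaling that converts the additive $CT$ into the clean multiplicative factor $(CT+1)$; verifying $\int_0^T G(u/\LGnorm{u})\,dt\le 1$ is the only mildly delicate technical point.
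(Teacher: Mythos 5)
Your proof is correct and follows essentially the same route as the paper's: both rest on the pointwise semi-symplectic inequality, the normalization $\int_0^T G(u/\LGnorm{u})\,dt\le 1$, the resulting modular bound $\int_0^T G^\star\left(Ju/(K\LGnorm{u})\right)dt\le 1+CT$, and homogeneity of the Luxemburg norm. The only cosmetic difference is how the modular bound $1+CT$ becomes a norm bound: the paper divides the argument by $CT+1$ and uses convexity (Proposition \ref{prop:elem_ine}) to push the modular below $1$ so the Luxemburg definition applies directly, whereas you invoke the comparison $\LGastnorm{v}\le\max\left\{1,\int_0^T G^\star(v)\,dt\right\}$ --- two interchangeable standard facts yielding the identical constant $K(CT+1)$.
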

\begin{proof}
Fix $K>0$, $C\geq 0$ such that $G^\star(Ju)\leq G(Ku)+C$, for all $u\in \R^{2n}$. Let $u\in
\LGspace$. Then there exists $\lambda >0$ such that $ \int_0^T G(u/\lambda) \,dt<\infty$
and
\begin{equation*}
\int_0^T G^\star\left( \frac{Ju}{K\lambda} \right) \,dt
\leq
C\,T + \int_0^T G(u/\lambda ) \,dt<\infty.
\end{equation*}
So that $Ju\in \LGastspace$. Suppose that $\LGnorm{u}=1$.  Then $ \int_0^T G(u) \,dt\leq 1$ and hence
\begin{equation*}
\int_0^T G^\star\left( \frac{Ju}{K(C\,T+1)} \right) \,dt
\leq
\frac{1}{C\, T + 1} \int_0^T G^\star\left( \frac{Ju}{K} \right) \,dt \leq 1.
\end{equation*}
This inequality implies that
\begin{equation*}
\LGastnorm{Ju}\leq K(C\,T+1)
\end{equation*}
and the result follows.
\end{proof}

Let $G\colon \R^{2n}\to [0,\infty)$ be a semi-symplectic G-function. From  Proposition \ref{prop:embbeding_for_symplectic_G}, it follows that the bilinear form
\begin{equation}\label{eq:bilinear-form}
 \int_0^T \inner{Jv}{u} \,dt,
\end{equation}
is well-defined and it is bounded on $\LGspace([0,T],\R^{2n})\times \LGspace([0,T],\R^{2n})$.

It is proved in \cite[Propossition 3.2]{mawhin2010critical} that for every $u\in
\WspaceSymbol_T^{1,2}([0,T],\R^{2n})$
\begin{equation}\label{eq:CG2_constant}
    \int_0^T \inner{J\dot u}{u}\, dt \geq -\frac{T}{2\pi} \int_0^T|\dot u|^2\,dt
\end{equation}
Similar estimate was obtained in \cite{TiaGe07} for $G(u_1,u_2)=|u_1|^p/p+|u_2|^q/q$, $1/p+1/q=1$. Below we show that the analogous estimate can be obtained for Orlicz-Sobolev space induced by any semi-symplectic G-function.

\begin{theorem}\label{thm:LowBoundQuadraticForm}
Let $G$ be a semi-symplectic G-function. Then there exist  constants $C_1$, $C_2>$ depending only
on $G$ and $T$ such that for every $u \in \WLGTspace([0,T],\R^{2n})$ we have
\begin{equation}\label{eq:cotaJu}
\int_0^T \inner{J\dot{u}}{u}\,dt\geq -C_1\int_0^T G\left(T\dot{u}\right)\,dt-C_2.
\end{equation}
\end{theorem}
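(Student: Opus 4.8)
The plan is to reduce the estimate to the zero–mean part of $u$ and then combine the Fenchel inequality with the semi-symplectic hypothesis. Writing $u=\widetilde{u}+\overline{u}$ with $\overline{u}$ constant, I would first note that $\dot u=\dot{\widetilde{u}}$ and that
\[
\int_0^T\inner{J\dot u}{\overline{u}}\,dt=\inner{J\!\int_0^T\dot u\,dt}{\overline{u}}=\inner{J\bigl(u(T)-u(0)\bigr)}{\overline{u}}=0,
\]
since $u(0)=u(T)$ on $\WLGTspace$. Hence $\int_0^T\inner{J\dot u}{u}\,dt=\int_0^T\inner{J\dot u}{\widetilde{u}}\,dt$, and it suffices to bound the latter from below. (The integral is finite by Proposition \ref{prop:embbeding_for_symplectic_G} and Hölder's inequality, so all manipulations are legitimate.)

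Next I would apply the generalized Fenchel inequality \eqref{ineq:generalFenshel} pointwise to the vectors $\widetilde{u}$ and $J\dot u$ with parameters $\mu,\nu>0$:
\[
\inner{J\dot u}{\widetilde{u}}=\inner{\widetilde{u}}{J\dot u}\geq -\mu\nu\,G(\widetilde{u}/\mu)-\mu\nu\,G^\star(J\dot u/\nu).
\]
The term carrying $G^\star$ is precisely where the symplectic structure enters: since $G$ is semi-symplectic there exist $K>0$, $C\ge 0$ with $G^\star(Jw)\le G(Kw)+C$ for all $w\in\R^{2n}$, and taking $w=\dot u/\nu$ gives $G^\star(J\dot u/\nu)\le G(K\dot u/\nu)+C$. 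Integrating over $[0,T]$ then yields
\[
\int_0^T\inner{J\dot u}{\widetilde{u}}\,dt\geq -\mu\nu\int_0^T G(\widetilde{u}/\mu)\,dt-\mu\nu\int_0^T G(K\dot u/\nu)\,dt-\mu\nu CT.
\]

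It then remains to absorb the two integrals into $\int_0^T G(T\dot u)\,dt$, which is the technical heart of the argument and the step I expect to be the main obstacle, since it forces the right constraints on $\mu,\nu$. For the first integral I take $\mu\ge 1$, use the scaling inequality of Proposition \ref{prop:elem_ine} in the form $G(\widetilde{u}/\mu)\le \mu^{-1}G(\widetilde{u})$, and then invoke the Poincaré–Wirtinger inequality \eqref{ineq:conjugateP-W}, obtaining $\mu\nu\int_0^T G(\widetilde{u}/\mu)\,dt\le \nu\int_0^T G(T\dot u)\,dt$. For the second integral I impose $\nu\ge K/T$, so that $K/\nu\le T$; as $G$ is convex with $G(0)=0$ it is nondecreasing along rays and $G(K\dot u/\nu)\le \tfrac{K}{\nu T}G(T\dot u)$, whence $\mu\nu\int_0^T G(K\dot u/\nu)\,dt\le \tfrac{\mu K}{T}\int_0^T G(T\dot u)\,dt$. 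Collecting the estimates gives \eqref{eq:cotaJu} with $C_1=\nu+\mu K/T$ and $C_2=\mu\nu CT$; any admissible choice, for instance $\mu=1$ and $\nu=K/T$, produces constants $C_1=2K/T$ and $C_2=KC$ depending only on $G$ and $T$. Thus the only genuinely delicate points are the bookkeeping that rewrites every $G$-argument in terms of $T\dot u$ and the use of periodicity to discard the mean-value contribution; once these are arranged, the bound is a consequence of pure convexity.
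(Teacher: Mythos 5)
Your proof is correct and follows essentially the same route as the paper's: kill the mean-value term by periodicity, apply the generalized Fenchel inequality \eqref{ineq:generalFenshel} to $\widetilde{u}$ and $J\dot u$, convert $G^\star(J\cdot)$ into $G$ via the semi-symplectic bound, and finish with the Poincar\'e--Wirtinger inequality \eqref{ineq:conjugateP-W}. Your specialization $\mu=1$, $\nu=K/T$ is exactly the paper's scaling $\frac{K}{T}\inner{\frac{T}{K}J\dot{u}}{\widetilde{u}}$ and yields the same constants $C_1=2K/T$, $C_2=KC$.
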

\begin{proof}
Let $u\in\WLGTspace([0,T],\R^{2n})$ and fix $K>0$, $C\geq 0$ such that $G^\star(Ju)\leq G(Ku)+C$, for all $u\in \R^{2n}$. By the Fenchel's inequality \eqref{ineq:generalFenshel}, the fact that  $G$ is a semi-symplectic and inequality \eqref{ineq:conjugateP-W}, we obtain
\begin{multline*}
\int_0^T \inner{J\dot{u}}{u} \,dt
=
\frac{K}{T}\int_0^T \inner{\frac{T}{K} J\dot{u}}{\widetilde{u}} \,dt
\geq\\\geq
-\frac{K}{T}\left\{\int_0^T G^\star \left(J\frac{T\dot{u}}{K}\right) \,dt+\int_0^T G(\widetilde{u}) \,dt\right\}\geq
-\frac{K}{T}\left\{ 2\int_0^T G(T\dot{u})\,dt+C\right\}.
\end{multline*}
\end{proof}

If $G$ is symplectic, instead of semi-symplectic, following the same lines as the proof of Theorem \ref{thm:LowBoundQuadraticForm}, we can prove that inequality \eqref{eq:cotaJu} is satisfied  with $C_1(T)=2/T$ and $C_2=0$. In addition, after the change of variable $t=Ts$, inequality \eqref{eq:cotaJu} takes the form
\begin{equation*}%\label{eq:CG-strongly}
\int_0^1\inner{J\dot{u}}{u}dt\geq -2\int_0^1 G\left(\dot{u}\right) dt.
%,\quad u \in  \WLGTspace([0,1],\R^{2n}).
\end{equation*}

The value  of the constant $C_1$  in  Theorem \ref{thm:LowBoundQuadraticForm} imposes restrictions on the results obtained in the following sections. A smaller constant $C_1$ results in a  more inclusive estimate for $\Lambda$ in Theorem \ref{thm:solution-ham} and Proposition \ref{prop:optimal_lambda}. Therefore, it is useful to obtain the smallest possible value for $C_1$.  For example, in \cite{mawhin2010critical} it is proved that $C_1=1/\pi$ when $G(u)=|u|^2/2$. In this case, we can see that the optimal constant  is far from $2$.

\begin{definition}
For a symplectic G-function $G$ we define
\begin{equation}\label{eq:def-optimal-constant}
C_{G}(T)=-\inf\left\{
\frac{\displaystyle\int_0^T \inner{J\dot{u}}{u }\,dt}{\displaystyle\int_0^T G\left( T\dot{u}\right)\,dt}
:\; u \in  \WLGTspace([0,T],\R^{2n})
\right\}
\end{equation}
\end{definition}
The rest of this section is devoted to the problem of optimality of $C_{G}(T)$. We relate this problem to the constrained optimization  problem and we obtain exact values in some special cases. Note that  the change of variable $t=Ts$ implies that $C_{G}(T)=C_{G}(1)/T$. Therefore, from now on in this section  we will assume that $T=1$ and $G$ is a symplectic function. For simplicity, we put $C_{G}:=C_{G}(1)$.

\begin{proposition}
The relation $C_{G}=C_{G^\star}$ holds for every symplectic function $G$.
\end{proposition}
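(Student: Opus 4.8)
The plan is to exploit the linear substitution $v=Ju$, which turns the Rayleigh-type quotient defining $C_{G^\star}$ into the one defining $C_{G}$. Since $G$ is symplectic it is in particular semi-symplectic, so Proposition \ref{prop:embbeding_for_symplectic_G} provides the embedding $J\colon \LGspace\hookrightarrow \LGastspace$; applying the same proposition to the symplectic function $G^\star$ (whose conjugate is $G^{\star\star}=G$) yields $J\colon \LGastspace\hookrightarrow \LGspace$. Because $J$ is linear with $J^{-1}=-J=J^{T}$ and $\frac{d}{dt}(Ju)=J\dot u$, the map $u\mapsto Ju$ preserves absolute continuity, sends $\dot u\in\LGspace$ to $J\dot u\in\LGastspace$, and respects the periodicity constraint, since $v(0)=Ju(0)=Ju(1)=v(1)$. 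Its inverse is $v\mapsto -Jv$. Hence $u\mapsto Ju$ is a bijection from $\WLGTspace([0,1],\R^{2n})$ onto $\WLGTspace[G^\star]([0,1],\R^{2n})$.

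Next I would compute how the quotient transforms under this substitution. For the numerator, using $J^2=-I$ and $J^{T}=-J$, with $v=Ju$ one has $J\dot v=J^{2}\dot u=-\dot u$, and pointwise
\begin{equation*}
\inner{J\dot v}{v}=\inner{-\dot u}{Ju}=-\inner{\dot u}{Ju}=\inner{J\dot u}{u},
\end{equation*}
so that $\int_0^1\inner{J\dot v}{v}\,dt=\int_0^1\inner{J\dot u}{u}\,dt$; the canonical quadratic form is invariant. For the denominator, the symplecticity of $G$ gives $G^\star(\dot v)=G^\star(J\dot u)=G(\dot u)$, whence $\int_0^1 G^\star(\dot v)\,dt=\int_0^1 G(\dot u)\,dt$.

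Combining these two identities, the value of the quotient appearing in the definition of $C_{G^\star}$ at $v=Ju$ equals the value of the quotient appearing in the definition of $C_{G}$ at $u$; the degenerate constant functions, on which both the numerator and the denominator vanish, are excluded in the same way on both sides and are mapped to one another by the bijection. Since $u\mapsto Ju$ runs bijectively over the two admissible sets, the two sets of quotient values coincide, so their infima are equal, and therefore $C_{G}=C_{G^\star}$.

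I expect the only delicate point to be the bookkeeping in establishing the bijection, in particular checking that $J$ maps the full Orlicz--Sobolev space (not merely $\LGspace$) onto its counterpart and preserves the periodicity structure; both computational identities are immediate consequences of $J^{2}=-I$ and the defining relation $G^\star(J\cdot)=G$.
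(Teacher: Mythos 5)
Your proof is correct and takes essentially the same approach as the paper: the paper also uses the substitution induced by $J$ (it starts from $v\in\WLGTspace[G^\star]$ and sets $u=Jv$, i.e.\ the same bijection read in the opposite direction), checks that the quadratic form $\int_0^1\inner{J\dot u}{u}\,dt$ is invariant and that symplecticity converts the denominator $G(\dot u)$ into $G^\star(\dot v)$, then concludes from the invertibility of $u\mapsto Ju$ between $\WLGTspace([0,1],\R^{2n})$ and $\WLGTspace[G^\star]([0,1],\R^{2n})$. Your additional bookkeeping on that bijection only makes explicit what the paper asserts in a single line.
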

\begin{proof}
For $v\in \WLGTspace[G^\star]$ and $u=Jv$, we have
\begin{equation*}
\frac{\displaystyle\int_0^1\langle J\dot{u},u\rangle dt}{\displaystyle\int_0^1G(\dot{u})dt}
=
\frac{\displaystyle\int_0^1\langle -\dot{v},Jv\rangle dt}{\displaystyle\int_0^1G(J\dot{v})dt}
=\frac{\displaystyle\int_0^1\langle J\dot{v},v\rangle dt}{\displaystyle\int_0^1G^\star(\dot{v})dt}
\end{equation*}
Using the fact that $u\mapsto Ju$ is invertible from $\WLGTspace([0,1],\R^{2n})$ onto $\WLGTspace[G^\star]([0,1],\R^{2n})$, the statement follows.
\end{proof}

Consider the following constrained optimization problem on $\WLGTspace\left([0,1],\R^{2n}\right)$:
\begin{equation}
\label{eq:eig_var_prob}
\tag{P}
\left\{
\begin{array}{l}
\hbox{minimize }f(u)\\
\hbox{subject to } g(u)=\gamma\\
\end{array}
\right.
\end{equation}
where $f,g:\WLGTspace\to \R$ are given by
\begin{equation*} f(u)=\int_0^1 \langle J\dot{u},  u \rangle \,dt,\quad
 g(u)=\int_0^1 G\left( \dot{u}\right)\,dt.
\end{equation*}
It is obvious that $f$ is $C^1$ map. Moreover, if $G^\star$ satisfies $\Delta_2$ then $g$ is also $C^1$ map.

For $\gamma>0$ set
\begin{equation*}
A(\gamma) = \inf\left\{f(u) \colon\;  u \in  \WLGTspace\left([0,1],\R^{2n}\right),\quad g(u)=\gamma \right\}.
\end{equation*}
With this notation we have
\begin{equation}\label{eq:CG}
C_{G}=-\inf\limits_{\gamma>0}\gamma^{-1} A(\gamma).
\end{equation}

\begin{lemma}\label{lem:exist}
Assume that $G$, $G^\star\in\Delta_2$. The problem \eqref{eq:eig_var_prob} has a solution $u_{\gamma}\in \WLGTspace\left([0,1],\R^{2n}\right)$.
\end{lemma}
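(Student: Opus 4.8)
The plan is to prove existence of a minimizer for the constrained problem \eqref{eq:eig_var_prob} via the direct method of the calculus of variations, working on the space $\WTLGtildespace$ rather than $\WLGTspace$. The key observation is that the functional $f(u)=\int_0^1\inner{J\dot u}{u}\,dt$ is invariant under the addition of constants (since $\int_0^1\inner{J\dot u}{\overline{u}}\,dt=\inner{J(u(1)-u(0))}{\overline u}=0$ using the periodicity $u(0)=u(T)$, and likewise $g$ depends only on $\dot u$), so without loss of generality we may restrict attention to functions with $\overline u=0$, i.e. to the subspace $\WTLGtildespace$. On this subspace the equivalent norm \eqref{eq:WLG_equivalent_norm} reduces to $\WLGnorm{u}'=\LGnorm{\dot u}$, which will let us control the full Sobolev norm by the modular $g(u)$.

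First I would take a minimizing sequence $u_k$ with $\overline{u_k}=0$ and $g(u_k)=\gamma$ for all $k$, and $f(u_k)\to A(\gamma)$. Since $g(u_k)=\gamma$ is fixed, the inequality \eqref{eq:lowbound_for_modular_by_norm2} gives $\LGnorm{\dot u_k}\leq\gamma+1$, so by the equivalent norm the sequence is bounded in $\WLGspace[G^\star]$. By Lemma \ref{lem:bounded_has_unif_conv_seq}, after passing to a subsequence $u_k$ converges uniformly to some $u_\gamma$. Because the derivatives $\dot u_k$ are bounded in $\LGspace[G^\star]$, which by the duality $\LGspace[G^\star]=(\EGspace)^\star$ carries a weak$\star$ topology, I would extract a further subsequence with $\dot u_k\rightharpoonup w$ weak$\star$; the uniform convergence identifies $w=\dot u_\gamma$, so $u_\gamma\in\WTLGtildespace$ with $\overline{u_\gamma}=0$.

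Next I would verify that $u_\gamma$ is admissible and optimal. For the objective $f$, I would rewrite $f(u_k)=\int_0^1\inner{J\dot u_k}{u_k}\,dt$ and pass to the limit using that $u_k\to u_\gamma$ uniformly (hence strongly in every $\LGspace$) together with the weak$\star$ convergence of $\dot u_k$, so $f(u_k)\to f(u_\gamma)=A(\gamma)$. For the constraint, the modular $u\mapsto\int_0^1 G^\star(\dot u)\,dt$ is convex and strongly-weakly$\star$ lower semicontinuous, giving $g(u_\gamma)\leq\liminf g(u_k)=\gamma$. The remaining issue is to rule out $g(u_\gamma)<\gamma$: here I would use that since $G^\star\in\Delta_2$ the modular is continuous and the constraint set has no gap, so a scaling argument $u_\gamma\mapsto su_\gamma$ with $s\geq1$ chosen to restore $g(su_\gamma)=\gamma$ combined with the homogeneity behaviour of $f$ under scaling shows that decreasing the modular could only decrease $f$ further, contradicting minimality; this forces $g(u_\gamma)=\gamma$.

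The main obstacle I anticipate is the passage to the limit in the indefinite quadratic form $f$: unlike $g$, the functional $f$ is neither convex nor obviously weak$\star$ continuous, so lower semicontinuity arguments do not apply directly. The resolution is precisely the compactness supplied by Lemma \ref{lem:bounded_has_unif_conv_seq} — uniform (hence strong $\LGspace$) convergence of $u_k$ pairs against the merely weak$\star$ convergent $\dot u_k$ to yield genuine convergence of the bilinear pairing, which is why $f$ behaves well despite being indefinite. A secondary delicate point is confirming $g(u_\gamma)=\gamma$ exactly rather than with strict inequality, where the $\Delta_2$-condition on $G^\star$ (guaranteeing continuity of $g$ and that $g$ is not identically attaining its infimum on a strictly smaller sublevel set) is essential.
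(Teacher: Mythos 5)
Your overall strategy coincides with the paper's: normalize to mean-zero functions using $f(u+c)=f(u)$, get boundedness from the constraint, extract a uniformly convergent subsequence via Lemma \ref{lem:bounded_has_unif_conv_seq} together with weak$\star$ convergence of the derivatives, pass to the limit in $f$ by pairing strong against weak$\star$ convergence, use weak$\star$ lower semicontinuity of the modular to get $g(u_\gamma)\leq\gamma$, and finally rescale to rule out strict inequality. However, your last step has a genuine gap: the scaling argument works only if $A(\gamma)<0$, and you never establish this. If $s>1$ restores $g(su_\gamma)=\gamma$, then $f(su_\gamma)=s^2 f(u_\gamma)$, and this contradicts the definition of $A(\gamma)$ only when $f(u_\gamma)<0$; if $f(u_\gamma)\geq 0$ one merely gets $f(su_\gamma)\geq f(u_\gamma)=A(\gamma)$, which is perfectly consistent with $A(\gamma)$ being an infimum. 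Your phrase ``decreasing the modular could only decrease $f$ further'' silently assumes this negativity. Worse, without $A(\gamma)<0$ you cannot rule out that the limit $u_\gamma$ is a constant, i.e. $\dot{u}_\gamma\equiv 0$; in that case $g(su_\gamma)=0$ for every $s$ and no rescaling can restore the constraint at all. The $\Delta_2$-condition, which you invoke at this point, does not repair either problem.

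The paper closes this hole with a one-line symmetry observation made before the direct method starts: if $u$ is admissible then so is the time reversal $v(t)=u(1-t)$ (admissibility uses that $G$ is even), and $f(v)=-f(u)$; hence $A(\gamma)<0$. With this in hand, $f(u_\gamma)=A(\gamma)<0$ forces $\dot{u}_\gamma\neq 0$, so $g(u_\gamma)>0$, and then convexity of $G$, $G(0)=0$ and the $\Delta_2$-condition make $\lambda\mapsto g(\lambda u_\gamma)$ finite, continuous and tending to infinity, producing the $\lambda>1$ with $g(\lambda u_\gamma)=\gamma$; finally $f(\lambda u_\gamma)=\lambda^2 f(u_\gamma)<A(\gamma)$ is the contradiction. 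You should add this ingredient to make your proof complete. A secondary, purely notational point: problem \eqref{eq:eig_var_prob} is posed on $\WLGTspace$ with $g(u)=\int_0^1 G(\dot{u})\,dt$, but you systematically wrote $G^\star$ for the constraint and the compactness step (e.g. boundedness in $\WLGspace[G^\star]$ and the duality $\LGspace[G^\star]=(\EspaceSymbol^{G})^\star$, which should read $\LGspace=(\EGspace[G^\star])^\star$); since both $G$ and $G^\star$ are assumed to satisfy $\Delta_2$ this does not affect the structure of the argument, but it should be corrected.
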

\begin{proof}
Note that if $u(t)$ is an admissible function for the problem \eqref{eq:eig_var_prob} (i.e. $u\in \WLGTspace\left([0,1],\R^{2n}\right)$ and $g(u)=\gamma$) then $v(t)=u(1-t)$ is also admissible. Hence $f(u)$ and $f(v)$ have different sign and consequently $A(\gamma)<0$.

Let $u_n$ be a minimizing  sequence for \eqref{eq:eig_var_prob}. We can assume that $f(u_n)<0$. Since $f(u+c)=f(u)$ for every $c\in\R^{2n}$, we can suppose that $\overline{u}=0$. It follows that $u_n$ is bounded.

This implies that there exists a subsequence (denoted $u_n$ again) and $u_{\gamma}\in\WLGTspace$ such that $u_u\to u_{\gamma}$ uniformly and $\dot{u}_u\rightharpoonup \dot{u}_{\gamma}$. % in the weak$\star$ topology.
Thus, by definition, $A(\gamma) = f(u_{\gamma})$. Since  $A(\gamma)<0$, we have $\dot{u}_{\gamma}\neq  0$ and $g(u_{\gamma})>0$. Since $g$ is weakly lsc, we have that $g(u_{\gamma})\leq \gamma$.

If $g(u_{\gamma})<\gamma$, then there would be a $\lambda>1$ with $g(\lambda  u_{\gamma})=\gamma$. But then $f(\lambda u_{\gamma}) =\lambda^2f(u_{\gamma})< f(u_{\gamma})=A(\gamma)$ which is a
 contradiction. This implies that $u_{\gamma}$ is admissible and the proof is finished.

\end{proof}

\begin{theorem}\label{thm,charact-constant}
Let $G$ be a differentiable and strictly convex symplectic function satisfying $\Delta_2$ condition. Then
\begin{equation}
\label{eq:Agamma}
C_{G}=\sup\frac{1}{ T_u}\frac{\displaystyle\int_0^{T_u} \inner{\nabla G(u)}{u}dt}{\displaystyle\int_0^{T_u} G^\star(\nabla G(u))dt},
\end{equation}
where the supremum is taken among all periodic solutions of the Hamiltonian system $J\dot{u}(t)=-\nabla G(u(t))$ and the constant $T_u$ denotes a period of $u$.
\end{theorem}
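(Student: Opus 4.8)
The plan is to set up a one-to-one correspondence, mediated by Lagrange multipliers, between the periodic solutions of $J\dot u=-\nabla G(u)$ and the constrained minimizers of \eqref{eq:eig_var_prob}, in such a way that the quotient in \eqref{eq:Agamma} for a solution $Q$ matches $-f(u)/g(u)$ for the corresponding admissible $u$. Recall first that \eqref{eq:CG} rewrites $C_{G}=\sup\{-f(u)/g(u):u\in\WLGTspace([0,1],\R^{2n}),\ g(u)>0\}$. I will prove the two inequalities separately. Throughout I use two consequences of symplecticity: since $G^\star(Jx)=G(x)$, replacing $x$ by $-Jx$ and using evenness gives $G(Jx)=G^\star(x)$; and \eqref{eq:nabla-sym} gives $\nabla G(Jy)=J\nabla G^\star(y)$ together with $\nabla G^\star(Jy)=-J\nabla G(y)$.

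For the bound $\sup_Q R\le C_{G}$ (where $R$ denotes the quotient in \eqref{eq:Agamma}), take any $T$-periodic solution $Q$ of $J\dot Q=-\nabla G(Q)$ and set $u(t)=-\tfrac1T\,Q(Tt)$ on $[0,1]$. This $u$ is smooth and periodic, hence admissible. From the equation $\dot Q=J\nabla G(Q)$ and $G(Jx)=G^\star(x)$ one gets $G(\dot Q)=G^\star(\nabla G(Q))$, so after the substitution $\tau=Tt$,
\begin{equation*}
g(u)=\int_0^1 G(\dot Q(Tt))\,dt=\frac1T\int_0^{T}G^\star(\nabla G(Q))\,d\tau,
\end{equation*}
while $\inner{J\dot Q}{Q}=-\inner{\nabla G(Q)}{Q}$ yields $f(u)=-\tfrac1{T^2}\int_0^{T}\inner{\nabla G(Q)}{Q}\,d\tau$. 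Dividing gives $-f(u)/g(u)=R(Q)$, whence $R(Q)\le C_{G}$.

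For the reverse bound $C_{G}\le\sup_Q R$ I use the minimizer $u_\gamma$ of \eqref{eq:eig_var_prob} from Lemma \ref{lem:exist}. Since $G$ is symplectic and $G\in\Delta_2$, also $G^\star\in\Delta_2$, so $f,g$ are $C^1$; moreover $g'(u_\gamma)\ne0$, for otherwise $\nabla G(\dot u_\gamma)$ would be constant, forcing $\dot u_\gamma\equiv 0$ against $g(u_\gamma)=\gamma>0$. Hence there is a multiplier $\mu$ with $f'(u_\gamma)=\mu\,g'(u_\gamma)$. Computing $f'(u)[h]=2\int_0^1\inner{J\dot u}{h}\,dt$ (by integration by parts, periodicity, and antisymmetry of $J$) and $g'(u)[h]=\int_0^1\inner{\nabla G(\dot u)}{\dot h}\,dt$, and using that the mean-zero functions in $\LGspace$ are exactly the derivatives of functions in $\WLGTspace$, the weak equation forces the pointwise identity $\mu\,\nabla G(\dot u_\gamma)=-2Ju_\gamma+d$ for some constant $d$. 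Testing with $h=u_\gamma$ and invoking Young's identity \eqref{eq:Fenchel:equality} gives $2f(u_\gamma)=\mu\int_0^1[G(\dot u_\gamma)+G^\star(\nabla G(\dot u_\gamma))]\,dt$; since $f(u_\gamma)=A(\gamma)<0$ and the integral is positive, $\mu<0$.

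It remains to turn $u_\gamma$ into a genuine solution. The identity for $\nabla G(\dot u_\gamma)$ has a continuous right-hand side, so $\dot u_\gamma=\nabla G^\star(\cdot)$ is continuous and a bootstrap yields smoothness. Setting $P:=-J\nabla G(\dot u_\gamma)$ (an affine image of $u_\gamma$, hence periodic), differentiating the Euler–Lagrange relation and applying the symplectic identities above gives $J\dot P=\tfrac2\mu\nabla G(P)$; as $\mu<0$, the time rescaling $Q(\tau):=P(\tfrac{|\mu|}2\tau)$ produces a $T_Q$-periodic solution of $J\dot Q=-\nabla G(Q)$ with $T_Q=2/|\mu|$, which is precisely the $Q$ that the construction of the first step sends back to $u_\gamma$. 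Consequently $R(Q)=-f(u_\gamma)/g(u_\gamma)=-\gamma^{-1}A(\gamma)$, and taking the supremum over $\gamma$ gives $C_{G}=\sup_\gamma(-\gamma^{-1}A(\gamma))=\sup_\gamma R(Q_\gamma)\le \sup_Q R$, which together with the first step proves \eqref{eq:Agamma}. I expect the main obstacle to be the rigorous justification of the Lagrange multiplier rule and the $C^1$ regularity of $g$ in the non-reflexive Orlicz setting (controlled by the $\Delta_2$ hypotheses), together with the regularity bootstrap that upgrades the weak Euler–Lagrange equation to a classical solution and the careful bookkeeping of the symplectic identities, signs, and the time rescaling so that periods and additive constants line up.
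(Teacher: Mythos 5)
Your proposal is correct and is essentially the paper's own argument: the constrained minimizer from Lemma~\ref{lem:exist}, the Lagrange multiplier rule yielding a negative multiplier, the transformation $P=-J\nabla G(\dot u_\gamma)$ (the paper's $u(s)=J\nabla G\left(\tfrac{du_\gamma}{dt}|_{t=\lambda s}\right)$) followed by a time rescaling to manufacture a periodic solution of $J\dot u=-\nabla G(u)$, and the rescaling of an arbitrary periodic solution to obtain the easy inequality. One small slip: your second symplectic identity should read $\nabla G^\star(Jy)=J\nabla G(y)$ (as written it contradicts your first identity, a sign issue the paper's own \eqref{eq:nabla-sym} shares), but since your key computation $J\dot P=\tfrac{2}{\mu}\nabla G(P)$ follows from the first (correct) identity alone, nothing downstream is affected.
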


\begin{proof}
Using Lemma \ref{lem:exist}, we obtain a function $u_{\gamma}\in \WLGTspace\left([0,1],\R^{2n}\right)$  satisfying  $f(u_{\gamma})=A(\gamma)$. Applying the Lagrange multiplier rule, we find $\lambda\in\R$ such that
\begin{equation*}
f'(u_{\gamma})=2\lambda g'(u_{\gamma}).
\end{equation*}
Consequently, for any $ w\in \WLGTspace([0,1],\R^{2n})$  we have that
\begin{equation*}
0=2\int_0^1 \inner{J\dot{u}_{\gamma}}{ w}\,dt- 2\lambda  \int_0^1 \inner{\nabla G(\dot{u}_{\gamma})}{ \dot{w}}\,dt.
\end{equation*}
Integrating by parts we get
\begin{equation}\label{eq:derivative-before-subst}
0=\int_0^1 \inner{J\dot{u}_{\gamma}+\lambda  \frac{d}{dt} \nabla G\left(\dot{u}_{\gamma}\right) }{w} \,dt
-\lambda \inner{\nabla G\left(\dot{u}_{\gamma}\right)}{ w}\Big|_0^1
\end{equation}
We deduce that for every $w \in C_0^{\infty}([0,1],\R^{2n})$
\begin{equation*}
0=\int_0^1 \left\langle
J\dot{u}_{\gamma}+\lambda \frac{d}{dt} \nabla G\left(\dot{u}_{\gamma}\right)
, w\right\rangle \,dt
\end{equation*}
Hence $J\dot{u}_{\gamma}+\lambda \frac{d}{dt} \nabla G\left(\dot{u}_{\gamma}\right)=0$ a.e. $t \in [0,1]$.
Consequently,  \eqref{eq:derivative-before-subst} implies that for every  $w \in \WLGTspace([0,1],\R^{2n})$
\begin{equation*}
0=\inner{\nabla G\left(\dot{u}_{\gamma}\right)}{ w}\Big|_0^1=
% \inner{ \nabla G\left(\dot{u}_{\gamma}(1)\right)}{w(1)}-\inner{
% \nabla G\left(\dot{u}_{\gamma}(0)\right)}{ w(0)}=
\left\langle \nabla G\left(\dot{u}_{\gamma}
(1)\right)-
\nabla G\left(\dot{u}_{\gamma}(0)\right), w(0)\right\rangle.
\end{equation*}
Since $w(0)$ is arbitrary and  $\nabla G$ is a one-to-one map, we have  $\dot{u}_{\gamma}(1)=\dot{u}_{\gamma}(0)$. Hence, $u_{\gamma}$ solves
\begin{equation}\label{eq:hamiltonianJ}
\left\{
\begin{array}{l}
J\dot{u}_{\gamma}+\lambda \frac{d}{dt} \nabla G(\dot{u}_{\gamma})=0, \;\;\mbox{a.e } t \in [0,1]
\\
u_{\gamma}(0)-u_{\gamma}(1)=\dot{u}_{\gamma}(0)-\dot{u}_{\gamma}(1)=0.
\end{array}
\right.
\end{equation}

Integration by parts and \eqref{eq:hamiltonianJ} yields
\begin{equation*}
  A(\gamma)=f(u_\gamma)= \int_0^1\inner{J\dot{u}_\gamma}{u_\gamma}dt
  =-\lambda \int_0^1\inner{\frac{d}{dt}\nabla G(\dot{u}_\gamma)}{u_\gamma}dt
  =\lambda \int_0^1\inner{\nabla G(\dot{u}_\gamma)}{\dot{u}_\gamma}dt.
\end{equation*}
Since $A(\gamma)<0$ (see proof of Lemma \ref{lem:exist}) and $\inner{\nabla G(\dot{u}_\gamma)}{\dot{u}_\gamma}>0$, we get $\lambda<0$.

Define $u(s):=J\nabla  G\left(\frac{du_{\gamma}}{dt}|_{t=\lambda s}\right)$. Note that $u(s)=-\nabla G^\star\left(J\frac{du_{\gamma}}{dt}|_{t=\lambda s}\right)$ by \eqref{eq:nabla-sym}. We have
\begin{multline*}
\nabla G(u(s))
= \nabla G\left(-\nabla G^\star\left(J\frac{du_{\gamma}}{dt}|_{t=\lambda s}\right) \right)
= -\left. J\frac{du_{\gamma}}{dt}\right|_{t=\lambda s}
=\\= \lambda\frac{d}{dt} \nabla  G\left(\frac{du_{\gamma}}{dt}|_{t=\lambda s}\right)
= -\lambda J\frac{d}{dt}u(s)
= - J\frac{d}{ds}u(s)
\end{multline*}
Hence $u$ solves $Jdu/ds=-\nabla G(u(s))$. Since $u$ solves an autonomous system  and $u(0)=u(\lambda^{-1})$, the function $u(s)$ is defined for every $s\in\R$ and is  $T_u$-periodic with $T_u=-\lambda^{-1}$.

Performing the change of variable $t=\lambda s$ we obtain
\begin{multline*}
A(\gamma)
=-\lambda^2 \int_{\lambda^{-1}}^0
    \inner{\nabla  G\left(\frac{du_{\gamma}}{dt}|_{t=\lambda s}\right)}{\frac{du_{\gamma}}{dt}|_{t=\lambda s}}ds
= -\lambda^2 \int_{\lambda^{-1}}^0\inner{Ju(s)}{J\nabla  G(u(s))}ds
=\\=
-\lambda^2 \int_{\lambda^{-1}}^0\inner{u(s)}{\nabla  G(u(s))}ds
% =-\frac{1}{T_u^2} \int_{-T_u}^0\inner{u(s)}{\nabla  G(u(s))}ds
=-\frac{1}{T_u^2} \int_{0}^{T_u}\inner{u(s)}{\nabla  G(u(s))}ds
\end{multline*}

Using the fact that $\nabla G(u(s))= -\left. J\frac{du_{\gamma}}{dt}\right|_{t=\lambda s}$ and that $G$ is symplectic we obtain
\begin{equation*}
\gamma= \int_0^1  G(\dot{u}_\gamma)dt
=-\lambda \int_{\lambda^{-1}}^0 G\left(\frac{du_{\gamma}}{dt}|_{t=\lambda s}\right)ds
=-\lambda \int_{\lambda^{-1}}^0 G(J\nabla  G(u(s)))ds
=\frac{1}{T_u}\int_0^{T_u}  G^\star(\nabla  G(u(s)))ds.
\end{equation*}

Thus, we have just proved that for every $\gamma>0$ there exists a $T_u$-periodic function $u$  such that  $\dot{u}=-\nabla G(u)$ and
\begin{equation*}
\frac{A(\gamma)}{\gamma}=
- \frac{1}{T_u} \frac{\displaystyle\int_0^{T_u}\inner{u(s)}{\nabla  G(u(s))}ds }{\displaystyle\int_0^{T_u}  G^\star(\nabla  G(u(s)))ds }
\end{equation*}

On the other hand, let $u:\R\to \R^{2n}$ be a periodic solution of $J\dot{u}(s)=-\nabla G(u(s))$ and let $T_u$ be a period of $u$. Set $u_0(t)=T_u^{-1}u(T_ut)$. Then $u_0\in \WLGTspace\left([0,1],\R^{2n}\right)$ and
\begin{equation*}
\inf_{\gamma>0}\frac{A(\gamma)}{\gamma}=
-C_G\leq \frac{\displaystyle\int_0^{1} \inner{J\dot{u}_0}{u_0} dt }{\displaystyle\int_0^{1}G(\dot{u}_0)dt}=\frac{1}{T_u}\frac{\displaystyle\int_0^{1} \inner{J\dot{u}(T_ut)}{u(T_ut)} dt }{\displaystyle\int_0^{1}G(\dot{u}(T_ut))dt}=-\frac{1}{ T_u}\frac{\displaystyle\int_0^{T_u} \inner{\nabla G(u)}{u}dt}{\displaystyle\int_0^{T_u} G^\star(\nabla G(u))dt},
\end{equation*}

From this assertion  we obtain the desired result.
\end{proof}

\begin{example} If $ G(u)=|u|^2/2$, then the equation $J\dot{u}=-\nabla G(u)$ is equivalent to the harmonic oscillator equation $\ddot{v}+v=0$. Here, we have that $T_u=2k\pi$ with $k\in\mathbb{N}$ and for every $u$. On the other hand, $\inner{\nabla  G (u)}{u}=2 G^\star(\nabla  G(u))$. Therefore $C_{G}=1/\pi $ (cf. \cite[Proposition 3.2]{mawhin2010critical}).
\end{example}

Let us adapt to anisotropic G-functions the definition of Simonenko indices (see \cite{FK97}, cf. \cite{BarCia17, ChmMak18}) :
\begin{equation*}
 p( G)=\inf\limits_{u\neq 0}\frac{\inner{u}{\nabla G(u)}}{G(u)},\quad  q( G)=\sup\limits_{u\neq 0}\frac{\inner{u}{\nabla G(u)}}{G(u)}
\end{equation*}
It is known that $ q( G)<\infty$ if and only if $ G$ is globally $\Delta_2$ and $p(G)>1$ if and only if $ G^\star$ is globally $\Delta_2$ (see \cite[Theorem 5.1]{KR}). Note that   if we write $v=\nabla G(u)$ then
\begin{equation*}
\frac{\inner{u}{\nabla  G(u)} }{ G^\star(\nabla  G(u))}=\frac{\inner{v}{\nabla  G^\star(v)} }{  G^\star(v)}\leq q( G^\star).
\end{equation*}
On the other hand, if $ G$ is symplectic then $p( G^\star)=p( G)$ and $q( G^\star)=q( G)$. The previous reasoning proves the next result.

\begin{corollary}
    \label{eq:estimate_simo}
Let $G$ be a differentiable and strictly convex symplectic function satisfying the $\Delta_2$ condition globally, then
\begin{equation*}
  p( G) (\inf T_u)^{-1} \leq C_{ G} \leq q( G)(\inf T_u)^{-1},
\end{equation*}
where the infimum is taken among all periods of functions $u$ which solve the Hamiltonian system $J\dot{u}=-\nabla G(u)$.
\end{corollary}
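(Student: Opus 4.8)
The plan is to read the corollary directly off the characterization in Theorem \ref{thm,charact-constant}, sandwiching the integrand that appears there by the Simonenko indices of $G^\star$ and then transporting the bounds to $G$ via symplecticity. Fix a nontrivial periodic solution $u$ of $J\dot u=-\nabla G(u)$ with period $T_u$. Since $\nabla G(0)=0$ and solutions of this autonomous system are unique, such a $u$ must satisfy $u(t)\neq 0$ for every $t$ (otherwise $u\equiv 0$); consequently $\nabla G(u(t))\neq 0$ and $G^\star(\nabla G(u(t)))>0$, so all the pointwise quotients below are well defined.

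First I would rewrite the integrand of \eqref{eq:Agamma} in terms of $G^\star$. Putting $v=\nabla G(u(t))$ and using the relation $\nabla G^\star=(\nabla G)^{-1}$ valid on $\Gamma(\R^{2n})$, the identity displayed just above the corollary gives, pointwise in $t$,
\begin{equation*}
\frac{\inner{\nabla G(u(t))}{u(t)}}{G^\star(\nabla G(u(t)))}
=\frac{\inner{v}{\nabla G^\star(v)}}{G^\star(v)}.
\end{equation*}
By the very definition of the Simonenko indices of the G-function $G^\star$, this ratio lies in $[p(G^\star),q(G^\star)]$ for every $t$; the global $\Delta_2$ hypothesis on $G$ (and hence, by symplecticity, on $G^\star$) guarantees that these indices are finite and that $p(G^\star)>1$, so the interval is meaningful.

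Next I would integrate these pointwise bounds over $[0,T_u]$. Because the denominator $G^\star(\nabla G(u))$ is nonnegative with positive integral, clearing it and integrating yields
\begin{equation*}
p(G^\star)\leq
\frac{\displaystyle\int_0^{T_u}\inner{\nabla G(u)}{u}\,dt}{\displaystyle\int_0^{T_u}G^\star(\nabla G(u))\,dt}
\leq q(G^\star).
\end{equation*}
Dividing by $T_u$ and invoking the symplectic identities $p(G^\star)=p(G)$ and $q(G^\star)=q(G)$ recorded above the corollary, every quotient under the supremum in \eqref{eq:Agamma} lies between $p(G)/T_u$ and $q(G)/T_u$.

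Finally I would pass to the supremum over all periodic solutions and their periods. Since each term is $\leq q(G)/T_u\leq q(G)(\inf T_u)^{-1}$, taking the supremum gives $C_G\leq q(G)(\inf T_u)^{-1}$; since each term is $\geq p(G)/T_u$, one obtains $C_G\geq\sup_u p(G)/T_u=p(G)(\inf T_u)^{-1}$, using $\sup_u(1/T_u)=(\inf_u T_u)^{-1}$. Together these give the claim. I expect the only genuinely delicate point to be the bookkeeping of periods: as $2T_u,3T_u,\dots$ are again periods and such a rescaling leaves the integral quotient unchanged while shrinking the factor $1/T_u$, the effective supremum is attained at minimal periods, so that $\inf T_u$ coincides with the infimum of the minimal periods and the two passages to the supremum are mutually consistent. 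Everything else is a direct application of Theorem \ref{thm,charact-constant} and the index computation preceding the statement.
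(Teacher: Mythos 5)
Your proof is correct and takes essentially the same route as the paper: the paper's (essentially one-line) proof of this corollary is precisely your chain of reasoning, namely the substitution $v=\nabla G(u)$ to bound the pointwise quotient $\inner{u}{\nabla G(u)}/G^\star(\nabla G(u))$ between $p(G^\star)$ and $q(G^\star)$, the symplectic identities $p(G^\star)=p(G)$, $q(G^\star)=q(G)$, and the characterization of $C_G$ in Theorem \ref{thm,charact-constant}. One tiny repair: the non-vanishing of $u(t)$ is better justified by conservation of energy ($G(u(t))$ is constant along solutions since $J$ is antisymmetric, and $G(u)=0$ iff $u=0$) rather than by uniqueness of solutions, which is not guaranteed when $\nabla G$ is merely continuous.
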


Next, we apply the previous results to some particular symplectic function $ G$.

\begin{theorem}
 Suppose that $n=1$ and $G:\mathbb{R}\times\mathbb{R}\to[0,\infty)$ given by  $G(u_1,u_2)=|u_1|^{p}/p+|u_2|^q/q$, with $1<p<\infty$ and $q=p/(p-1)$. Then
  \begin{equation*}
 C_ G= \frac{p\sin\left(\frac{\pi}{p}\right)}{2\pi(p-1)^{1/p}}.
\end{equation*}
 \end{theorem}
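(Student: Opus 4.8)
The plan is to invoke Theorem~\ref{thm,charact-constant}. The function $G(u_1,u_2)=|u_1|^p/p+|u_2|^q/q$ is of the form $\Phi(u_1)+\Phi^\star(u_2)$ with $\Phi(s)=|s|^p/p$, hence it is a differentiable, strictly convex, symplectic G-function, and both $G$ and $G^\star$ satisfy the $\Delta_2$ condition globally. Therefore
\begin{equation*}
C_G=\sup\frac{1}{T_u}\frac{\displaystyle\int_0^{T_u}\inner{\nabla G(u)}{u}\,dt}{\displaystyle\int_0^{T_u}G^\star(\nabla G(u))\,dt},
\end{equation*}
the supremum being over all periodic solutions $u$ of $J\dot u=-\nabla G(u)$ with period $T_u$. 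First I would compute the two integrands: since $\nabla G(u)=(|u_1|^{p-2}u_1,|u_2|^{q-2}u_2)$, we have $\inner{\nabla G(u)}{u}=|u_1|^p+|u_2|^q$, and by Young's identity \eqref{eq:Fenchel:equality}, $G^\star(\nabla G(u))=\inner{\nabla G(u)}{u}-G(u)=|u_1|^p/q+|u_2|^q/p$.

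The key step is to show that the ratio of the two integrals equals $2$ on every orbit. The system $J\dot u=-\nabla G(u)$ reads $\dot u_1=|u_2|^{q-2}u_2$, $\dot u_2=-|u_1|^{p-2}u_1$, so that
\begin{equation*}
\frac{d}{dt}(u_1u_2)=|u_2|^q-|u_1|^p.
\end{equation*}
Integrating over a full period and using periodicity gives $\int_0^{T_u}|u_1|^p\,dt=\int_0^{T_u}|u_2|^q\,dt=:I$. Hence the numerator is $2I$, the denominator is $I(1/q+1/p)=I$, the ratio is $2$, and $C_G=\sup_u 2/T_u$. Because $G$ is strictly convex its level sets are nested closed convex curves, each carrying one periodic orbit, so the supremum of $1/T_u$ is attained at the (common) minimal period.

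Finally I would compute that minimal period from the conserved energy $E=|u_1|^p/p+|u_2|^q/q$. On the half-orbit where $u_2>0$ one has $\dot u_1=u_2^{q-1}=[q(E-|u_1|^p/p)]^{1/p}$ (using $(q-1)/q=1/p$), whence
\begin{equation*}
T_u=2\int_{-(pE)^{1/p}}^{(pE)^{1/p}}\frac{du_1}{[q(E-|u_1|^p/p)]^{1/p}}.
\end{equation*}
The substitution $u_1=(pE)^{1/p}x$ shows the dependence on $E$ cancels (confirming the common period) and yields $T_u=4(p/q)^{1/p}\int_0^1(1-x^p)^{-1/p}\,dx$. The remaining integral is a Beta integral: with $s=x^p$ it equals $\tfrac1p B(1/p,1-1/p)=\tfrac1p\Gamma(1/p)\Gamma(1-1/p)=\tfrac{\pi}{p\sin(\pi/p)}$ by the reflection formula. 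Since $p/q=p-1$, this gives $T_u=\tfrac{4\pi(p-1)^{1/p}}{p\sin(\pi/p)}$, and therefore $C_G=2/T_u=\tfrac{p\sin(\pi/p)}{2\pi(p-1)^{1/p}}$.

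The main obstacle I anticipate is the period computation: correctly reducing the orbit traversal to the quarter-period integral, verifying the period is the same for all energy levels, and evaluating the resulting integral via the Beta function and Euler's reflection formula. The identity $\tfrac{d}{dt}(u_1u_2)=|u_2|^q-|u_1|^p$ that forces the ratio to be exactly $2$ is the clean observation that makes the rest a direct calculation.
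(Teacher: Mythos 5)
Your proposal is correct, and it reaches the same milestones as the paper's proof --- invoke Theorem \ref{thm,charact-constant}, show the orbit-wise ratio of integrals equals $2$, and identify the common minimal period $T_p=\frac{4\pi(p-1)^{1/p}}{p\sin(\pi/p)}$ --- but the two key computations are carried out by genuinely different means. For the ratio, the paper normalizes an orbit to the level set $G\equiv 1$ (using the scaling $(\lambda u_1,\lambda^{p-1}u_2)$ to argue the quotient is orbit-independent), writes $\int_0^{T_p}\inner{u}{\nabla G(u)}\,dt=2A(D)$ by Green's theorem, computes the enclosed area $A(D)=T_p$ by a Beta integral, and then obtains the denominator from Young's identity as $2A(D)-\int_0^{T_p} G(u)\,dt=T_p$; your identity $\frac{d}{dt}(u_1u_2)=|u_2|^q-|u_1|^p$ yields $\int_0^{T_u}|u_1|^p\,dt=\int_0^{T_u}|u_2|^q\,dt$ directly on every orbit, which is shorter, needs no normalization, and avoids Green's theorem altogether. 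For the period, the paper simply cites the known isochronicity of the scalar $p$-Laplacian oscillator and the closed form of $T_p$ (the reference \cite{AGMS}), whereas you rederive both by quadrature from energy conservation, the cancellation of $E$ simultaneously proving isochronicity; this makes your proof self-contained at the cost of the extra Beta/reflection-formula computation. The one place to tighten is the sentence asserting that the supremum of $1/T_u$ is attained at the common minimal period: you should say explicitly that energy conservation forces every nonconstant periodic solution to traverse a level curve of $G$, so its minimal period is exactly the traversal time you compute, and since a period $T_u$ in Theorem \ref{thm,charact-constant} may be any integer multiple of the minimal one, the supremum of $2/T_u$ equals $2/T_p$ by isochronicity. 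With that made precise, your argument is complete and gives $C_G=\frac{p\sin(\pi/p)}{2\pi(p-1)^{1/p}}$ as claimed.
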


\begin{proof}
  It is easy to see that the equation $J\dot{u}=-\nabla G(u)$ is equivalent to  $p$-Laplacian equation
\begin{equation}\label{eq:p.laplacian}
\frac{d}{ds}|\dot{u_1}(s)|^{p-2}\dot{u_1}(s)+ |u_1(s)|^{p-2}{u_1(s)}=0, \;\; s\in\R.
\end{equation}
It is well known that the  1-dimensional $p$-Laplacian equation is  isochronous,
i.e. all solutions are periodic with the same minimal period given by
\begin{equation*}
 T_{p}=4(p-1)^{-1/q}B\left(1+\frac{1}{q},\frac{1}{p}\right)=\frac{4  \pi (p-1)^{1/p} }{p\sin \left(\frac{\pi}{p}\right)},
\end{equation*}
where $B$ denotes the Beta Function (see \cite{AGMS} for the proof).

If $u$ is a  solution of the equation $J\dot{u}=-\nabla   G(u)$,  then for every $\lambda>0$ the function $\overline{u}=(\lambda u_1,\lambda^{p-1}u_2)$ is also a solution. This  observation implies that the quotient
\begin{equation*}\frac{1}{ T_p}\frac{\displaystyle\int_0^{T_p} \inner{\nabla  G(u)}{u}dt}{\displaystyle\int_0^{T_p} G^\star(\nabla G(u))dt}\end{equation*}
is independent of the solution.
Consequently, we can take the solution of $J\dot{u}=-\nabla   G(u)$ satisfying $ G(u(0))=1$. Since  $p$-Laplacian equation \eqref{eq:p.laplacian} has gives rise to an autonomous Hamiltonian system (with Hamiltonian function $- G$), we have that $G(u(t))\equiv 1$ for every  $t\in [0,1]$.

Let $C$ be the closed simple curve parametrized by $u(t)=(u_1(t),u_2(t))$ and let $D$ be the region inside $C$ whose area is denoted by $A(D)$. Note that $C$ is traveled in clockwise direction. From Green's Theorem
\begin{equation*}
 \int_0^{T_p}\inner{u}{\nabla  G(u)}dt
 = \int_0^{T_p}\inner{u}{-J\dot{u}}dt
  = \int_0^{T_p}\inner{Ju}{\dot{u}}dt=\oint_Cu_2du_1-u_1du_2
  =2\iint_DdA=2A(D).
  \end{equation*}
  Using that the curve $C$ is given implicitly  by the equation $ G(u(t))=1$ and performing the change of variable $r=1-s^p/p$, we have that
  \begin{equation*}
  A(D)=4q^{1/q}\int_0^{p^{1/p}}\left(1-\frac{s^p}{p}\right)^{1/q}ds=4(p-1)^{-1/q}\int_0^1r^{1/q}(1-r)^{-1/q}dr=4(p-1)^{-1/q}B\left(\frac{1}{q}+1,\frac{1}{p}\right)=T_p.
 \end{equation*}
On the other hand, using Young's identity
\begin{equation*}
\int_0^{T_p} G^\star(\nabla G(u))dt=\int_0^{T_p}\inner{u}{\nabla  G(u)}dt-\int_0^{T_p}G(u)dt=T_p.
\end{equation*}
Collecting all computations, we obtain the result of the theorem.
\end{proof}

\begin{remark} In the case $n>1$,  the vector $p$-Laplacian equation \eqref{eq:p.laplacian} was studied in several articles (see \cite{manasevich1999spectrum} for a survey on the subject).  If we write $u_1=(u_{1,1},0,\ldots,0)$ being $u_{1,1}:\R\to\R$ a periodic solution of the scalar $p$-Laplacian equation  \eqref{eq:p.laplacian}, we obtain a solution of the vector $p$-Laplacian equation. This simple observation shows that  $C_ G\geq  p\sin\left(\pi/p\right)/2\pi(p-1)^{1/p}$. However, as it is pointed out in  \cite{manasevich1999spectrum}, the vector $p$-Laplacian equation has other periodic solutions with periods incommensurable with $T_p$. More precisely, the following function
\begin{equation*}u_1(t)=u_0\cos t +v_0\sin t,\end{equation*}
where $u_0,v_0\in\R^{n}$ are fixed vectors with $\inner{u_0}{v_0}=0$ and $|u_0|=|v_0|$,
is solution. These functions satisfy that  $|u_1(t)|:=a$  is constant and $T_{u_1}=2\pi$. Recalling that $u_2=|u_1|^{p-2}u_1$, we have
   \begin{equation*}\frac{\displaystyle \int_0^{2\pi} \inner{\nabla  G(u)}{u}\,dt}{\displaystyle\int_0^{2\pi} G^\star(\nabla G(u))\,dt}=
   \frac{\displaystyle\int_0^{2\pi} |u_1|^p+|u_2|^q\,dt}{\displaystyle\int_0^{2\pi}\frac{|u_1|^p}{q}+\frac{|u_2|^q}{p}\,dt}=2.
   \end{equation*}
Consequently  $C_{ G}\geq 1/\pi$, but it is not a new result because  $p\sin\left(\pi/p\right)/2\pi(p-1)^{1/p}\geq 1/\pi$.

It is asked in \cite{manasevich1999spectrum} if the previous ones are essentially all periodic solutions of the vector $p$-Laplacian equations. As far as we know, this question remains an open question.
\end{remark}

%%%%%%%%%%%%%%%%%%%%%%%%%%%%%%%%%%%%%%%%%%%%%%%%%%%%%%%%%%%%%%%%%%%%%%%%%%%%%%%%
%                                                                              %
%          Diffrentiability of dual action                                     %
%                                                                              %
%%%%%%%%%%%%%%%%%%%%%%%%%%%%%%%%%%%%%%%%%%%%%%%%%%%%%%%%%%%%%%%%%%%%%%%%%%%%%%%%

\section{Differentiability of Hamiltonian dual action}
\label{sec:differentiability}

In this section, we establish the differentiability of the dual action.

\begin{theorem}\label{thm:DiffDualAct}
Suppose that $ G\colon\R^{2n}\to [0,+\infty)$ is a differentiable G-function with $ G^\star$ semi-symplectic. Additionally, we assume that
\begin{enumerate}[label=$\arabic*)$]
\item \label{it:h1-prop-H}
$\Hcal:[0,T]\times\R^{2n}\to \R$ is measurable in $t$, continuously differentiable with respect to $u$ and such that $\Hcal(t,\cdot)\in  \Gamma(\R^{2n})$.
\item \label{it:h2-cotaH-conjphi}there exist $\beta, \gamma \in \Lpspace[1]([0,T],\R)$, $\Lambda>\lambda>0$ such that
\begin{equation}\label{eq:cota-H-phi-conj}
 G\left(\lambda u\right)-\beta(t)\leq \Hcal(t,u) \leq  G\left(\Lambda u\right)+\gamma(t)
\end{equation}
\end{enumerate}
Then,  the dual action
\begin{equation}\label{eq:DualAct}
 \chi(v)=\int_0^T \frac{1}{2} \langle J\dot{v},v\rangle+\Hcalast (t,\dot{v})\, dt
\end{equation}
is G\^ateaux differentiable on $\WLGTspace[ G^\star]([0,T],\R^{2n}) \cap \{v|d(\dot{v},\Lpspace[\infty])<\lambda\}$.

Moreover, if $v$ is a critical point of $\chi$ with $d(\dot{v}, \Lpspace[\infty]) < \lambda$, then the function defined by $u=\nabla  \Hcalast(t,\dot{v})$ belongs to $\WLGspace([0,T],\R^{2n})$, solves
\begin{equation*}
\left\{\begin{array} {lll}
\dot{u}&=&J\nabla \Hcal (t,u)
\\
u(0)&=&u(T),
\end{array}
\right.
\end{equation*}
and the relation $\dot{u}=J\dot{v}$ holds.
\end{theorem}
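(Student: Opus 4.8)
The plan is to split the dual action as $\chi=Q+\Psi$, where $Q(v)=\frac12\int_0^T\langle J\dot v,v\rangle\,dt$ is the canonical quadratic part and $\Psi(v)=\int_0^T\Hcalast(t,\dot v)\,dt$, and to treat the two pieces separately. For $Q$, observe that since $G^\star$ is semi-symplectic, Proposition \ref{prop:embbeding_for_symplectic_G} applied with $G^\star$ in place of $G$ shows that $v\mapsto Jv$ maps $\LGastspace$ into $\LGspace$; hence for $v\in\WLGTspace[G^\star]$ we have $J\dot v\in\LGspace$ and $Q(v)$ is well defined through the generalized Hölder inequality. Differentiating and integrating by parts, where the boundary terms cancel by the periodicity built into $\WLGTspace[G^\star]$ together with $\langle Jx,y\rangle=-\langle x,Jy\rangle$, yields $Q'(v)w=\int_0^T\langle J\dot v,w\rangle\,dt$, a bounded linear functional in $w$.

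The crucial step, and the main obstacle, is to show that $u:=\nabla\Hcalast(t,\dot v)$ belongs to $\LGspace$ whenever $d(\dot v,\Lpspace[\infty])<\lambda$. Conjugating the two-sided bound \eqref{eq:cota-H-phi-conj} (using $(aG(b\cdot)-c)^\star=aG^\star(\cdot/ab)+c$) gives $G^\star(v/\Lambda)-\gamma(t)\le\Hcalast(t,v)\le G^\star(v/\lambda)+\beta(t)$. In particular, for a.e.\ fixed $t$, the function $\Hcalast(t,\cdot)$ satisfies the hypotheses of Proposition \ref{prop: cota-conj-phi} with the role of $G$ played by $G^\star(\cdot/\lambda)$ (which satisfies \eqref{eq:cond-G-infinity} because $G^\star$ is a G-function, and whose conjugate is $G(\lambda\cdot)$) and with constants $\gamma(t)$, $\beta(t)$. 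Thus for every $r>1$,
\[
G\bigl(\lambda\,\nabla\Hcalast(t,v)\bigr)\le\frac{1}{r-1}G^\star(rv/\lambda)+\frac{r}{r-1}\bigl(\beta(t)+\gamma(t)\bigr).
\]
Integrating over $[0,T]$ with $v=\dot v$, the last two terms lie in $\Lpspace[1]$, so it remains to produce $r>1$ with $\int_0^TG^\star(r\dot v/\lambda)\,dt<\infty$. Here the distance hypothesis is decisive: writing $\dot v=h+g$ with $h\in\Lpspace[\infty]$ and $\LGastnorm{g}<\lambda$, I choose $r>1$ close to $1$ so that $r\LGastnorm{g}<\lambda$ and split $r\dot v/\lambda$ convexly with weights $\theta=1-r\LGastnorm{g}/\lambda$ and $1-\theta$; then $G^\star(r\dot v/\lambda)$ is dominated by a bounded contribution coming from $h$ plus a term whose Luxemburg norm is $\le1$ coming from $g$. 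This gives $\int_0^TG(\lambda\,\nabla\Hcalast(t,\dot v))\,dt<\infty$, so $u\in\LGspace$ by the definition of the Orlicz space. This estimate is what replaces a $\Delta_2$-type argument, which is unavailable since $G^\star$ need not satisfy $\Delta_2$.

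With $u\in\LGspace$ in hand, the candidate derivative $\Psi'(v)w=\int_0^T\langle\nabla\Hcalast(t,\dot v),\dot w\rangle\,dt$ is a bounded linear functional by Hölder. To justify it I would apply dominated convergence: for fixed $w$, choose $s_0>0$ small enough that $d(\dot v\pm s_0\dot w,\Lpspace[\infty])<\lambda$, which is possible because the defining condition is open and $d(\cdot,\Lpspace[\infty])$ is $\LGastnorm{\cdot}$-Lipschitz. By convexity the difference quotients $s^{-1}\bigl(\Hcalast(t,\dot v+s\dot w)-\Hcalast(t,\dot v)\bigr)$ are monotone in $s$, and by \eqref{ineq:G_by_gradientG} they are dominated for $0<|s|\le s_0$ by $|\langle\nabla\Hcalast(t,\dot v+s_0\dot w),\dot w\rangle|+|\langle\nabla\Hcalast(t,\dot v-s_0\dot w),\dot w\rangle|$, which belongs to $\Lpspace[1]$ by the previous paragraph applied at $\dot v\pm s_0\dot w$ together with Hölder. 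Letting $s\to0$ inside the integral gives $\Psi'(v)w$, and combining with $Q'$ shows $\chi$ is Gâteaux differentiable on the stated set with $\chi'(v)w=\int_0^T\langle J\dot v,w\rangle+\langle u,\dot w\rangle\,dt$.

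Finally, for the critical point statement, $\chi'(v)=0$ reads $\int_0^T\langle u,\dot w\rangle\,dt=-\int_0^T\langle J\dot v,w\rangle\,dt$ for all $w\in\WLGTspace[G^\star]$. Testing first against $w\in C_0^\infty((0,T),\R^{2n})$ identifies the distributional derivative of $u$ as $\dot u=J\dot v$, which lies in $\LGspace\subset\Lpspace[1]$; hence, after modification on a null set, $u$ is absolutely continuous with $\dot u=J\dot v$, and since also $u\in\LGspace$ we conclude $u\in\WLGspace$. Feeding this back into the weak identity and integrating by parts leaves only the boundary contribution $\langle u(T)-u(0),w(0)\rangle=0$; taking constant functions $w$ yields $u(0)=u(T)$. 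To recognize the equation, I use that $\Hcal(t,\cdot)\in\Gamma(\R^{2n})$ implies $\nabla\Hcalast(t,\cdot)=(\nabla\Hcal(t,\cdot))^{-1}$, so $u=\nabla\Hcalast(t,\dot v)$ is equivalent to $\dot v=\nabla\Hcal(t,u)$. Therefore $\dot u=J\dot v=J\nabla\Hcal(t,u)$ with $u(0)=u(T)$, which is precisely the asserted Hamiltonian system, and the relation $\dot u=J\dot v$ holds by construction.
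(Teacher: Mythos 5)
Your proof is correct, but it takes a genuinely different route from the paper's at the decisive step. Both arguments begin identically --- conjugating \eqref{eq:cota-H-phi-conj} to get $G^\star(v/\Lambda)-\gamma(t)\le\Hcalast(t,v)\le G^\star(v/\lambda)+\beta(t)$ and applying Proposition \ref{prop: cota-conj-phi} to obtain the gradient estimate $G(\lambda\nabla\Hcalast(t,v))\le\tfrac{1}{r-1}G^\star(rv/\lambda)+\tfrac{r}{r-1}(\beta+\gamma)$ --- and both end identically, identifying the weak derivative of $u=\nabla\Hcalast(t,\dot v)$ with $J\dot v$ via the fundamental lemma and inverting $\nabla\Hcal(t,\cdot)$ to read off the Hamiltonian system. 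The difference is in the middle: the paper does not prove G\^ateaux differentiability from scratch, but writes $\chi$ as the action of the Lagrangian $\Lcal(t,v,\xi)=\frac12\inner{J\xi}{v}+\Hcalast(t,\xi)$ and verifies the structural growth condition \eqref{eq:cota-L-gradL-conj} of \cite[Theorem 4.5]{MA2017} (three estimates, for $|\Lcal|$, $|\nabla_v\Lcal|$ and $G(\nabla_\xi\Lcal/\lambda_0)$, with $\lambda_0=2/\lambda$ and $\Lambda_0=\lambda/r$, letting $r\downarrow 1$ recover the stated set), so that the external theorem delivers both the differentiability and the weak Euler--Lagrange equation for critical points. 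You instead exploit the additive structure $\chi=Q+\Psi$, handle the quadratic part by integration by parts, and prove differentiability of $\Psi$ directly by monotone difference quotients plus dominated convergence; the domination rests on your convex-splitting argument ($\dot v=h+g$ with $h\in\Lpspace[\infty]$, $\LGastnorm[G^\star]{g}<\lambda$, weights $\theta=1-r\LGastnorm[G^\star]{g}/\lambda$) showing $\int_0^T G^\star(r\dot v/\lambda)\,dt<\infty$ for some $r>1$, which is exactly where the hypothesis $d(\dot v,\Lpspace[\infty])<\lambda$ is consumed --- a point the paper leaves hidden inside the citation. What each approach buys: the paper's is shorter given the machinery of \cite{MA2017} and applies to general Lagrangians without the special quadratic-plus-convex form; yours is self-contained, avoids any $\Delta_2$ assumption just as the paper does, makes the role of the distance condition transparent, and yields a derivative formula $\chi'(v)w=\int_0^T\inner{J\dot v}{w}+\inner{u}{\dot w}\,dt$ that agrees with the paper's weak equation once the $\frac12 Jv$ term there is integrated by parts. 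The only small points worth making explicit in your write-up are that $\Psi$ is finite at every point of the set in question (immediate from the same splitting applied to the upper bound $\Hcalast(t,\cdot)\le G^\star(\cdot/\lambda)+\beta(t)$) and that $\Hcalast(t,\cdot)$ is itself differentiable and in $\Gamma(\R^{2n})$ because $\Hcal(t,\cdot)\in\Gamma(\R^{2n})$, so that both Proposition \ref{prop: cota-conj-phi} and the pointwise limit in the dominated-convergence step apply; neither affects correctness.
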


\begin{proof}
First, we conjugate \eqref{eq:cota-H-phi-conj} and we obtain
\begin{equation}\label{eq:cotaH*-phi}
-\gamma(t)\leq G^\star\left(\frac{v}{\Lambda}\right)-\gamma(t)\leq \Hcalast(t,v)\leq G^\star\left(\frac{v}{\lambda}\right)+\beta(t).
\end{equation}
Assumption \ref{it:h1-prop-H} guarantees that $\Hcalast$  is continuously differentiable with respect to $v$. Applying Proposition \ref{prop: cota-conj-phi} to $\Hcalast$ and $ G^\star(v/\lambda )$ instead of $\Hcal$ and $G$, for any $r>1$ we get
\begin{equation}\label{eq:main-phiconj-phi}
G\left(\lambda\nabla \Hcalast(t,v)\right)\leq \frac{1}{r-1}  G^\star\left(r\frac{v}{\lambda }\right)+\frac{r}{r-1}(\beta+\gamma).
\end{equation}

Consider the Lagrangian function $\Lcal:[0,T]\times\R^{2n}\times \R^{2n}\to \R$ given by
\begin{equation}
\Lcal(t,v,\xi)=\frac{1}{2}\langle J\xi,v\rangle + \Hcalast(t,\xi).
\end{equation}
In \cite[Theorem 4.5]{MA2017}, it was proved that if  there exist $\Lambda_0,\lambda_0>0$ and functions $a \in C(\R^{2n},\R)$
and $b\in \Lpspace[1]([0,T],\R)$ such that
\begin{equation}\label{eq:cota-L-gradL-conj}
|\mathcal{L}|+|\nabla_v\mathcal{L}|+ G\left(\frac{\nabla_{\xi}\mathcal{L}}{\lambda_0}\right)\leq
a(v)\left(b(t)+ G^\star\left(\frac{\xi}{\Lambda_0}\right)\right),
\end{equation}
then $\chi$, which is the action functional corresponding to $\mathcal{L}$, is G\^ateaux differentiable on the set
$\WLGTspace[ G^\star]([0,T],\R^{2n})\cap\{v|d(\dot{v},\Lpspace[\infty]) < \Lambda_0\}$.

In order to show that an inequality like \eqref{eq:cota-L-gradL-conj} holds, first we provide an estimation for
$\mathcal{L}$.
From \eqref{eq:cotaH*-phi} and since $J$ is orthogonal, we have
\begin{equation*}
|\mathcal{L}|\leq  \frac{1}{2}|\xi||v|+ G^\star\left(\frac{\xi}{\lambda}\right)+\beta(t).
\end{equation*}
Since $\frac{ G^\star(v)}{|v|}\to \infty$ as $|v|\to \infty$, there exists $C>0$ such that $|v|\leq  G^\star(v)+C$ for all $v \in \R^{2n}$.
Then,
\begin{equation}\label{eq:cota_lx}
|\mathcal{L}|\leq \frac{1}{2} \lambda|v|\left[ G^\star\left(\frac{\xi}{\lambda}\right)+C\right]
+ G^\star\left(\frac{\xi}{\lambda}\right)+\beta(t)
\leq
\max\{ 1,\lambda|v|\}
\left[ G^\star\left(\frac{\xi}{\lambda}\right)+C+\beta(t)\right],
\end{equation}
which is an estimate like  the right hand side of \eqref{eq:cota-L-gradL-conj}.

Now, we provide an estimate for $|\nabla_v \mathcal{L}|$. Applying the same technique as above, we get
\begin{equation}\label{eq:cota-grad-v}
|\nabla_v \mathcal{L}|=\frac{1}{2}|J\xi|\leq |\xi| \leq \lambda\left[ G^\star\left(\frac{\xi}{\lambda}\right)+C\right],
\end{equation}
which is also an estimate of the desired type.

Finally, we deal with $ G(\nabla_{\xi}\mathcal{L}/{\lambda_0})$.
Since $ G$ is a convex, even function, we have
\begin{equation*} G\left(\frac{\nabla_{\xi}\mathcal{L}}{\lambda_0}\right)=
 G\left(\frac{-\frac{1}{2}Jv}{\lambda_0}+\frac{\nabla \Hcalast(t,\xi)}{\lambda_0}\right)
\leq
\frac12  G\left(\frac{Jv}{\lambda_0}\right)+
\frac12 G\left(\frac{2\nabla \Hcal^{\star}(t,\xi)}{\lambda_0}\right).
\end{equation*}
Now, choosing $\lambda_0=2/\lambda$ and applying \eqref{eq:main-phiconj-phi}, we  have
\begin{equation}\label{eq:cota-conj-grad}
\begin{split}
 G\left(\frac{\nabla_{\xi}\mathcal{L}}{\lambda_0}\right)&\leq
\frac12 G\left(\frac{\lambda Jv}{2}\right)+\frac12 G^\star\left(r\frac{\xi}{\lambda }\right)+\frac12\frac{r}{r-1}(\beta+\gamma)\\&=
\frac12\max\left\{ G\left(\frac{\lambda Jv}{2}\right),1\right\}
\left[ G^\star\left(r\frac{\xi}{\lambda}\right)+\frac{r}{r-1}(\beta+\gamma)\right],
\end{split}
\end{equation}
which again is an estimate of the desired form.

Therefore, from \eqref{eq:cota-grad-v},\eqref{eq:cota_lx} and \eqref{eq:cota-conj-grad},
we see that condition \eqref{eq:cota-L-gradL-conj} holds
for appropriate functions $a$ and $b$  and for $\Lambda_0=\lambda/r$.

This implies differentiability of $\chi$ in a set $\WLGTspace[ G^\star]([0,T],\R^N)\cap\{v|d(\dot{v},\Lpspace[\infty])<\Lambda_0\}$. Since $r$ is any number bigger than 1, $\Lambda_0$ is arbitrary close to $\lambda$. Thus $\chi$ is differentiable on $\WLGTspace[ G^\star]([0,T],\R^N)\cap\{v|d(\dot{v},\Lpspace[\infty])<\lambda\}$.

Let  $v \in \WLGTspace[ G^\star]([0,T],\R^N)\cap \{d(\dot{v},\Lpspace[\infty])<\lambda\}$
be a critical point of $\chi$. Then, from \cite[Theorem 4.5]{MA2017}, we obtain
\begin{equation*}
\int_0^T\langle \nabla \Hcalast(t,\dot{v})-\frac12J v,\dot{h} \rangle dt=
-\int_0^T \frac{1}{2} \langle J\dot{v},h\rangle dt.
\end{equation*}

From Proposition \ref{prop:embbeding_for_symplectic_G} and \eqref{eq:main-phiconj-phi} we deduce that the functions $\nabla\Hcalast(t,\dot{v})-\frac12Jv$ and  $J\dot{v} $ are in the space $\LGspace$. Since  $\LGspace\hookrightarrow \Lpspace[1]$, from the Fundamental Lemma (see \cite[Chapter 1]{mawhin2010critical})  we deduce that  $\Hcalast(t,\dot{v})-\frac12J v$ is absolutely continuous. It follows that $v$ solves $J\dot{v}=\nabla \Hcalast(t,\dot{v})$ and therefore by duality we obtain desired result.
\end{proof}

\begin{remark}
If in addition we assume that $G^\star\in \Delta_2$ then $d(\dot{v},\Lpspace[\infty])=0$, since $\Lpspace[\infty]$ is dense in $\EGspace[G^\star]=\LGspace[G^\star]$. In this case $\chi$ is continuously differentiable on the whole space $\WLGTspace[G^\star]$ (see \cite{MA2017}).
\end{remark}

%%%%%%%%%%%%%%%%%%%%%%%%%%%%%%%%%%%%%%%%%%%%%%%%%%%%%%%%%%%%%%%%%%%%%%%%%%%%%%%%
%                                                                              %
%         existence for hamiltonian                                            %
%                                                                              %
%%%%%%%%%%%%%%%%%%%%%%%%%%%%%%%%%%%%%%%%%%%%%%%%%%%%%%%%%%%%%%%%%%%%%%%%%%%%%%%%

\section{Existence of  periodic solutions for Hamiltonian system}
\label{sec:existence}

The following theorem establishes the existence of minimum for the dual action functional. Our result is a generalization of \cite[Theorem 3.1]{mawhin2010critical}, where the existence was established for $|u|^2/2$. Even for the function $|u|^2/2$ our theorem is slightly better than \cite[Theorem 3.1]{mawhin2010critical}. We obtain existence when under assumption that the functions $\xi\Lpspace[2]$ and $\alpha\Lpspace[1]$ instead of $\Lpspace[4]$ and $\Lpspace[2]$ respectively, as it was assumed in \cite{mawhin2010critical}. This little improvement is due to the observation in Remark \ref{com:mejora_desi}.

\begin{theorem}\label{thm:solution-ham}
Suppose that $ G\colon \R^{2n}\to [0,\infty)$ is a G-function such that  $G\in\Gamma(\R^{2n})$, $G^\star$ is semi-symplectic and $G^\star \in \Delta_2$.  Assume  $\Hcal\colon [0,T]\times\R^{2n}\to \R$ is $C^1$ and $\Hcal(t,\cdot)\in  \Gamma(\R^{2n})$. Additionally suppose that

\begin{enumerate}[label=$(H_\arabic*)$]

%  \labitem{H1)}{it:hip1}
\item\label{it:hip1}
There exists $\xi\in \LGspace[ G^\star]([0,T],\R^{2n})$ such that for every $u \in \R^{2n}$ and a.e. $t \in [0,T]$
\begin{equation*}
\Hcal(t,u)\geq \langle \xi(t), u\rangle.
\end{equation*}

%\labitem{H2)}{it:hip2}
\item\label{it:hip2}
There exist $\Lambda $ with $\Lambda^{-1}> T\max\{1,C_{ G^\star}(T)/2\}$ and $\alpha \in \Lpspace[1]([0,T],\R) $
such that, for every $u\in \R^{2n}$ and a.e. $t \in [0,T]$, we have
\begin{equation*}
\Hcal(t,u)\leq  G\left( \Lambda u\right)+\alpha(t).
\end{equation*}

%  \labitem{H3)}{it:hip3}
\item\label{it:hip3}
\begin{equation*}
  \int_0^T\Hcal(t,u)dt\to+\infty,\quad\hbox{when } |u|\to+\infty.
\end{equation*}

\end{enumerate}
Then, there exists $u\in\WLGTspace([0,T],\R^{2n})$ which is a solution of the problem
\begin{equation}
    \tag{HS}
    \label{eq:hamiltonian}
\begin{cases}
    \dot{u} = J \nabla \Hcal(t,u),& \text{ a.e. on $[0,T]$}\\
    u(0) = u(T),
\end{cases}
\end{equation}
and such that $v=-J\widetilde{u}$ minimizes the dual action
\begin{equation*}
\chi(v)= \int_0^T \frac{1}{2} \langle J\dot{v},v\rangle+\Hcalast(t,\dot{v})\,dt.
\end{equation*}
\end{theorem}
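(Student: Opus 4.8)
The plan is to use the Clarke dual action method together with a regularization, since $(H_1)$ only gives a linear lower bound on $\Hcal$, so $\Hcalast$ is not controlled from above and the differentiability Theorem \ref{thm:DiffDualAct} cannot be applied directly. I would first perturb, setting $\Hcal_\varepsilon(t,u) = \Hcal(t,u) + G(\varepsilon u)$ for small $\varepsilon>0$. Using the generalized Fenchel inequality \eqref{ineq:generalFenshel} to absorb $\inner{\xi}{u}$, one obtains the lower bound $\Hcal_\varepsilon(t,u) \ge G(\tfrac{\varepsilon}{2} u) - \beta_\varepsilon(t)$ with $\beta_\varepsilon = \tfrac12 G^\star(2\xi/\varepsilon) \in \Lpspace[1]$ (here $G^\star \in \Delta_2$ and $\xi \in \LGspace[G^\star]$ are used), while the second inequality of Proposition \ref{prop:elem_ine} turns $(H_2)$ into the upper bound $\Hcal_\varepsilon(t,u) \le G((\Lambda+\varepsilon)u) + \alpha(t)$. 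For $\varepsilon$ small, $\Lambda_\varepsilon := \Lambda + \varepsilon$ still satisfies $\Lambda_\varepsilon^{-1} > T\max\{1, C_{G^\star}(T)/2\}$, and $\Hcal_\varepsilon(t,\cdot) \in \Gamma(\R^{2n})$.

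For the regularized problem both ingredients are now available. By Theorem \ref{thm:DiffDualAct} together with $G^\star \in \Delta_2$, the dual action $\chi_\varepsilon(v) = \int_0^T \tfrac12\inner{J\dot v}{v} + \Hcal_\varepsilon^\star(t,\dot v)\,dt$ is continuously differentiable on all of $\WLGTspace[G^\star]$. Conjugating the upper bound gives $\Hcal_\varepsilon^\star(t,v) \ge G^\star(v/\Lambda_\varepsilon) - \alpha(t)$; combining this with the quadratic form estimate of Theorem \ref{thm:LowBoundQuadraticForm} applied to $G^\star$ (with the optimal constant $C_{G^\star}(T)$) and with the elementary convexity inequality $G^\star(\mu x) \ge \mu G^\star(x)$ for $\mu \ge 1$ — writing $\dot v/\Lambda_\varepsilon = \mu_\varepsilon\,(T\dot v)$ with $\mu_\varepsilon = (\Lambda_\varepsilon T)^{-1} > \max\{1, C_{G^\star}(T)/2\}$ — yields $\chi_\varepsilon(v) \ge \delta_\varepsilon \int_0^T G^\star(T\dot v)\,dt - C$ with $\delta_\varepsilon = \mu_\varepsilon - C_{G^\star}(T)/2 > 0$. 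As $\chi_\varepsilon$ is invariant under additive constants, it is coercive on $\WTLGtildespace[G^\star]$, and the direct method (weak lower semicontinuity of the convex term and the compactness Lemma \ref{lem:bounded_has_unif_conv_seq}, which makes the bilinear form weakly continuous) produces a minimizer $v_\varepsilon$. Being critical, it gives, via Theorem \ref{thm:DiffDualAct}, a function $u_\varepsilon = \nabla\Hcal_\varepsilon^\star(t,\dot v_\varepsilon)$ solving $\dot u_\varepsilon = J\nabla\Hcal_\varepsilon(t,u_\varepsilon)$, $u_\varepsilon(0)=u_\varepsilon(T)$, with $\dot u_\varepsilon = J\dot v_\varepsilon$ and $v_\varepsilon = -J\widetilde u_\varepsilon$.

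The main obstacle is to obtain a priori bounds on $u_\varepsilon$ uniform in $\varepsilon$. First I would bound $\min\chi_\varepsilon$ from above uniformly: let $c_0 \in \R^{2n}$ minimize $c \mapsto \int_0^T \Hcal(t,c)\,dt$ (finite and coercive by $(H_3)$), so that $\int_0^T \nabla\Hcal(t,c_0)\,dt = 0$ and the primitive $v_0$ of $\nabla\Hcal(\cdot,c_0)$ is $T$-periodic. Since $\Hcal_\varepsilon^\star \le \Hcalast$ and $\Hcalast(t,\nabla\Hcal(t,c_0)) = \inner{\nabla\Hcal(t,c_0)}{c_0} - \Hcal(t,c_0)$ by Young's identity \eqref{eq:Fenchel:equality}, testing gives $\min\chi_\varepsilon \le \chi_\varepsilon(v_0) \le \text{const} - \int_0^T \Hcal(t,c_0)\,dt$, independent of $\varepsilon$. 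Feeding this into the coercivity estimate bounds $\int_0^T G^\star(T\dot v_\varepsilon)\,dt$, hence $\|\dot u_\varepsilon\| = \|\dot v_\varepsilon\|$ and (via Lemma \ref{lem:bounded_has_unif_conv_seq}) $\|\widetilde u_\varepsilon\|_{\Lpspace[\infty]}$, uniformly. The bound on the mean $\overline u_\varepsilon$ is the delicate point: using $\min\chi_\varepsilon \ge -C$ and the identity $\chi_\varepsilon(v_\varepsilon) = \tfrac12\int_0^T \inner{\nabla\Hcal_\varepsilon(t,u_\varepsilon)}{u_\varepsilon}\,dt - \int_0^T \Hcal_\varepsilon(t,u_\varepsilon)\,dt$ one bounds $\int_0^T \Hcal(t,u_\varepsilon)\,dt$ from above; then convexity of $\Hcal(t,\cdot)$, the gradient estimate of Proposition \ref{prop: cota-conj-phi} (cf. Remark \ref{com:mejora_desi}), and the uniform bound on $\widetilde u_\varepsilon$ transfer this into an upper bound for $\int_0^T \Hcal(t,\overline u_\varepsilon)\,dt$, and $(H_3)$ forces $|\overline u_\varepsilon|$ to stay bounded.

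Finally, with $\{u_\varepsilon\}$ bounded in $\WLGTspace$, Lemma \ref{lem:bounded_has_unif_conv_seq} yields a subsequence with $u_\varepsilon \to u$ uniformly and $\dot u_\varepsilon \rightharpoonup \dot u$. Since $\varepsilon u_\varepsilon \to 0$ uniformly, the perturbation term $\varepsilon\,J\nabla G(\varepsilon u_\varepsilon) \to 0$, so passing to the limit in the weak form of $\dot u_\varepsilon = J\nabla\Hcal(t,u_\varepsilon) + \varepsilon J\nabla G(\varepsilon u_\varepsilon)$ shows that $u$ solves \eqref{eq:hamiltonian}. That $v = -J\widetilde u$ minimizes $\chi$ follows because $\Hcal_\varepsilon \ge \Hcal$ gives $\chi_\varepsilon \le \chi$ with $\chi_\varepsilon \nearrow \chi$ as $\varepsilon \to 0$, whence $\chi(w) \ge \lim_\varepsilon \min\chi_\varepsilon$ for every $w$, while weak lower semicontinuity of the monotone convex integrands gives $\chi(v) \le \liminf_\varepsilon \chi_\varepsilon(v_\varepsilon) = \lim_\varepsilon \min\chi_\varepsilon$. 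I expect the a priori estimates of the third paragraph — especially the passage from a bound on $\int_0^T \Hcal(t,u_\varepsilon)\,dt$ to a bound on the mean through $(H_3)$ — to be the technical heart of the argument.
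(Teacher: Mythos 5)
Your architecture is the same as the paper's (perturb $\Hcal_\varepsilon=\Hcal+G(\varepsilon\,\cdot)$, minimize the perturbed dual action, derive uniform estimates, pass to the limit), and your first, second and fourth paragraphs match the paper's Steps 1 and 3 in all essentials. The gaps are in your third paragraph, the a priori estimates, which you yourself flag as the technical heart. The first gap: when you test $\chi_\varepsilon$ against the primitive $v_0$ of $\nabla\Hcal(\cdot,c_0)$, the inequality $\min\chi_\varepsilon\leq\chi_\varepsilon(v_0)$ requires $v_0$ to lie in the space $\WTLGtildespace[G^\star]$ over which you minimize, i.e.\ $\dot v_0=\nabla\Hcal(\cdot,c_0)\in\LGspace[G^\star]$; a priori this function is only in $\Lpspace[1]$. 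The paper proves this membership in \eqref{eq:bound-G*-L1} by applying Proposition \ref{prop: cota-conj-phi} to $\Hcal(t,u)+G(\Lambda u)$ compared against $2G(\Lambda u)$ (using both \ref{it:hip1} and \ref{it:hip2}), obtaining $G^\star\bigl(\dot w/(4\Lambda)\bigr)\in\Lpspace[1]$. You never address this, and ironically this is exactly the step where Proposition \ref{prop: cota-conj-phi} is the right tool. (Minor related point: your ``$\norm{\dot u_\varepsilon}=\norm{\dot v_\varepsilon}$'' is not an equality; the bound transfers through the semi-symplectic embedding $G(J\cdot)\prec G^\star$, since the two derivatives live in different Orlicz spaces.)

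The second gap is in the bound on the mean. As stated, ``$\min\chi_\varepsilon\geq-C$ plus the identity'' does not bound $\int_0^T\Hcal(t,u_\varepsilon)\,dt$ from above: the identity $\chi_\varepsilon(v_\varepsilon)=\tfrac12\int_0^T\inner{\nabla\Hcal_\varepsilon(t,u_\varepsilon)}{u_\varepsilon}\,dt-\int_0^T\Hcal_\varepsilon(t,u_\varepsilon)\,dt$ only yields $\int_0^T\Hcal_\varepsilon\,dt\leq\tfrac12\int_0^T\inner{\nabla\Hcal_\varepsilon}{u_\varepsilon}\,dt+C$, and you still need the upper bound $\int_0^T\inner{\nabla\Hcal_\varepsilon}{u_\varepsilon}\,dt=-\int_0^T\inner{J\dot u_\varepsilon}{u_\varepsilon}\,dt\leq c_4$, which is precisely the quadratic-form estimate \eqref{eq:lower-bound-omega-eps} obtained from Theorem \ref{thm:LowBoundQuadraticForm} and the uniform bound on $\int_0^T G^\star(T\dot v_\varepsilon)\,dt$; you have these ingredients but do not invoke them at this point. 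More seriously, the proposed transfer from $\int_0^T\Hcal(t,u_\varepsilon)\,dt$ to the mean via the gradient estimate of Proposition \ref{prop: cota-conj-phi} would fail: that estimate, evaluated where it would have to be (at a multiple of $\overline u_\varepsilon$), produces a term of order $G(r\overline u_\varepsilon)$ which cannot be absorbed, because \ref{it:hip3} gives only coercivity of $\int_0^T\Hcal(t,\cdot)\,dt$ and $\Hcal$ may grow far more slowly than $G$. The paper avoids this circularity with the elementary midpoint inequality $2\Hcal\bigl(t,\overline u_\varepsilon/2\bigr)\leq\Hcal(t,u_\varepsilon)+\Hcal(t,-\widetilde u_\varepsilon)$, controlling the second term by \ref{it:hip2} at the point $\widetilde u_\varepsilon$, which is already uniformly bounded in $\Lpspace[\infty]$, and only then applying \ref{it:hip3} to $\overline u_\varepsilon/2$. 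With these two repairs your argument coincides with the paper's proof.
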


\begin{proof}
\textbf{Step 1:}
Suppose that $0<r<1$ and $\varepsilon>0$ are small enough to have
\begin{equation*}
\Lambda^{-1}> (1+r)T\max\{1,C_{G^\star}(T)/2\}\quad\hbox{and}\quad\varepsilon<r\Lambda.
\end{equation*}
We define the perturbed Hamiltonian by
\begin{equation*}
  \Hcal_{\varepsilon}(t,u)=\Hcal(t,u)+G(\varepsilon u).
\end{equation*}
By \ref{it:hip1}, inequality \eqref{ineq:generalFenshel} and Proposition \ref{prop:elem_ine}, we have
\begin{equation}\label{eq:CotaSupH}
\Hcal_{\varepsilon}(t,u)
\geq \left\langle \xi(t), u \right\rangle +G(\varepsilon u)
\geq -G^\star\left(\frac{1}{r\varepsilon} \xi(t)\right)-G(r\varepsilon u)+G(\varepsilon u)
\geq  G((1-r)\varepsilon u)-\beta(t),
\end{equation}
where, since $G^\star\in\Delta_2$, $\beta(t):=G^\star( \frac{1}{r\varepsilon} \xi(t))\in \Lpspace[1]$.
On the other hand,  Proposition \ref{prop:elem_ine} implies that
\begin{equation}\label{eq:CotaInfH}
  \Hcal_{\varepsilon}(t,u)  \leq   G\left(\Lambda u\right)+\alpha(t)+ G(\varepsilon u)\leq
     G\left((1+r)\Lambda u\right)+\alpha(t).
\end{equation}
From  \eqref{eq:CotaSupH}, \eqref{eq:CotaInfH} and  properties of Fenchel conjugate, we get
\begin{equation}\label{eq:CotaH*}
 G^{\star}\left(\frac{v}{(1+r)\Lambda}\right)-
\alpha(t)\leq \Hcalast_{\varepsilon}(t,v)\leq  G^{\star}\left(\frac{v}{(1-r)\varepsilon}\right)
+\beta(t).
\end{equation}

Define the perturbed dual action $\chi_\varepsilon\colon \WLGTspace[G^\star]([0,T],\R^{2n}) \to \R$ by
\begin{equation}\label{eq:DualActDelta}
\chi_{\varepsilon}(v)=\int_0^T \frac{1}{2} \langle J\dot{v},v\rangle
+\Hcal_{\varepsilon}^{\star}(t,\dot{v})\,dt.
\end{equation}
From \eqref{eq:CotaH*} and \eqref{eq:cotaJu}, we have
\begin{equation*}
\chi_{\varepsilon}(v)\geq
-\frac{C_{G^{\star}}(T)}{2} \int_0^T G^{\star} (T\dot{v})\,dt+\int_0^T G^{\star}
\left(\frac{ \dot{v}}{(1+r)\Lambda}\right)\,dt-\int_0^T \alpha(t)\,dt-C_2.
\end{equation*}
Thus, as $T(1+r)\Lambda<1$ we obtain
\begin{equation}\label{eq:bound_coercivity}
  \begin{split}
      \chi_{\varepsilon}(v)&\geq
      -\frac{C_{ G^{\star}}(T)}{2} \int_0^T  G^{\star} (T\dot{v})\,dt+\frac{1}{T(1+r)\Lambda}\int_0^T  G^{\star}
      \left(T\dot{v}\right)\,dt-\int_0^T \alpha(t)\,dt-C_2
      \\
      &>\left(\frac{1}{T(1+r)\Lambda}- \frac{C_{ G^{\star}}(T)}{2}   \right)\int_0^T  G^{\star} (T\dot{v})\,dt-\int_0^T \alpha(t)\,dt-C_2\\
      &=:C_{\chi}\int_0^T  G^{\star} (T\dot{v})\,dt-B_{\chi}.\\
  \end{split}
\end{equation}

By the definition of $\Lambda$ and our choice of $r$ we have that $C_{\chi}>0$. Since $\chi_{\varepsilon}(v)=\chi_{\varepsilon}(v+c)$ with $c \in \R^{2n}$, it is sufficient to minimize $\chi_\varepsilon$ on $\WTLGtildespace[G^\star]([0,T],\R^{2n})$.

The perturbed dual action is coercive on this space. To see this let $\{v_n\}\subset\WTLGtildespace[ G^\star]([0,T],\R^{2n})$ and suppose that $\WLGnorm[ G^\star]{v_n} \to \infty$. Then $\LGnorm[ G^\star]{\dot{v}_n} \to \infty$ or $|\overline{v}_n|\to \infty$. Since $\overline{v}_n=0$, $\LGnorm[ G^\star]{\dot{v}_n} \to \infty$. Hence from \eqref{eq:lowbound_for_modular_by_norm2} we obtain that $\int_0^T G^{\star} (T\dot{v}_n)\,dt \to \infty$ and consequently $\chi_{\varepsilon}(\dot{v}_n)\to\infty$, by \eqref{eq:bound_coercivity}.

It follows that if $\{v_n\}\subset\WTLGtildespace[ G^\star]([0,T],\R^{2n})$ is a minimizing sequence for $\chi_{\varepsilon}$ then $\dot{v}_n$ is a bounded sequence in $\LGspace[ G^\star] = \left(\EGspace[ G]\right)^\star$. Following a standard argument (see \cite[Theorem 3.2]{MA2017}),
we obtain a function $v_{\varepsilon}\in\WTLGtildespace[ G^\star]([0,T],\R^{2n})$ which is a minimum
of $\chi_{\varepsilon}$.

As $ G^\star \in\Delta_2$ then $\Lpspace[\infty]$ is dense in $\LGspace[ G^\star]$ and consequently $d(\dot{v}_{\varepsilon},\Lpspace[\infty])=0$. Theorem \ref{thm:DiffDualAct} implies that
\begin{equation*}
u_\varepsilon(t) = \nabla \Hcal_\varepsilon^\star(t,\dot{v}_\varepsilon) \in \WLGTspace([0,T],\R^{2n})
\end{equation*}
is a solution to
\begin{equation}
\begin{cases}
\label{eq:hamiltonian:existence:perturbedproblem}
\dot{u}
=J\nabla \Hcal_\varepsilon(t,z)=\varepsilon
J  \nabla  G\left(\varepsilon u \right)
+
J\nabla\Hcal(t,u)
\\
u(0)=u(T)
\end{cases}
\end{equation}
and the relation $\dot{u}_\varepsilon=J\dot{v}_\varepsilon$ holds.

\noindent\textbf{Step 2:}  Now, we provide a posteriori estimates on $u_\varepsilon = \nabla \Hcalast_\varepsilon(t,v_\varepsilon)$.
It is easy to verify (see \cite[page 47]{mawhin2010critical}) that there exists $\overline{u}\in\R^{2n}$ such that
\begin{equation*}
\int_0^T \nabla \Hcal(t,\overline{u})\, dt=0.
\end{equation*}
We define
\begin{equation*}
w(t)=\int_0^t \nabla \Hcal(s,\overline{u})\, ds+c,
\end{equation*}
where $c$ is chosen in order to  $ \int_0^T w\, dt = 0$.
    The function $w$ is absolutely continuous, we show that $w\in \WLGspace$.
From \ref{it:hip1}, \ref{it:hip2} and inequality \eqref{ineq:generalFenshel}, it follows  that for any $t\in[0,T]$ and $u\in\R^{2n}$
\begin{equation*}-G^\star\left(\frac{\xi(t)}{\Lambda}\right) \leq
    \Hcal(t,u)+G(\Lambda u)\leq 2G(\Lambda u)+\alpha(t).
\end{equation*}
Therefore, the function $\Hcal(t,u)+G(\Lambda u)$ and the G-function $2G(\Lambda u)$ satisfy hypothesis of Proposition \ref{prop: cota-conj-phi}.
Consequently, taking $r=2$
\begin{equation*}
  G^\star\left(\frac{\nabla\Hcal(t,u)+\Lambda\nabla G(\Lambda u)}{2\Lambda}\right)\leq  G(2\Lambda u)+ 2 G^\star\left(\frac{\xi(t)}{\Lambda}\right)+ 2\alpha(t).
\end{equation*}
This inequality and the fact that $\dot{w}= \nabla\Hcal(t,\overline{u})$ imply that
\begin{equation}\label{eq:bound-G*-L1}
\begin{split}
   G^\star\left( \frac{\dot{w}}{4\Lambda} \right)
  &=
   G^\star\left(\frac{\nabla\Hcal(t,\overline{u})}{4\Lambda} \right)\\
  &\leq
  \frac12 G^\star\left(\frac{\nabla\Hcal(t,\overline{u})+\Lambda\nabla G(\Lambda \overline{u})}{2\Lambda}\right)+\frac12 G^\star\left(\nabla G\left(\frac{\Lambda \overline{u}}{2}\right)\right)\\
  &\leq 2 G(2\Lambda \overline{u})+ 2 G^\star\left(\frac{\xi(t)}{\Lambda}\right)+ 2\alpha(t)
  \in \Lpspace[1].
\end{split}
\end{equation}
Thus $\dot{w}\in\LGspace[ G^\star]$. Moreover, $\Hcal^\star(t,\dot{w})=\inner{\dot{w}}{\overline{u}}-\Hcal(t,\overline{u})$ so that $\Hcal^\star(\cdot,\dot{w}(\cdot))\in \Lpspace[1]([0,T],\R)$.

%%%%%%%% -------- Step 2.b -------- %%%%%%%%%

From inequality $\Hcal(t,u)\leq \Hcal_\varepsilon(t,u)$, we deduce that $\Hcal_\varepsilon^\star(t,v)\leq
\Hcal^\star(t,v)$. By  inequality \eqref{eq:bound_coercivity} and \eqref{eq:bound-G*-L1}, we have
\begin{equation*}
    C_{\chi}\int_0^T  G^{\star} (T \dot{v}_\varepsilon)\,dt-B_{\chi}
     \leq \chi_\varepsilon(v_\varepsilon)
    \leq \chi_\varepsilon(w)
    \leq  \int_0^T \frac12\inner{J\dot{w}}{w} + \Hcal^\star(t,\dot{w})\,dt
    =:c_1 < \infty.
\end{equation*}
Since $ G^\star$ is semi-symplectic, there exist $C,k>0$ with $ G(Ju)\leq  G^\star(ku)+C$. Moreover, since $\dot{u}_\varepsilon=J\dot{v}_\varepsilon$, we have
\begin{equation*}
\int_0^T  G\left( \frac{ T}{k} \dot{u}_\varepsilon \right) \,dt
=
\int_0^T  G\left( \frac{ T}{k} J\dot{v}_\varepsilon \right) \,dt
\leq
C+\int_0^T  G^\star\left( T\dot{v}_\varepsilon \right) \,dt \leq c_2,
\end{equation*}
and
\begin{equation}\label{eq:ue-ve-relation}
  Jv_\varepsilon=u_\varepsilon-\overline{u}_\varepsilon.
\end{equation}
It follows from  \eqref{eq:lowbound_for_modular_by_norm2} that $\dot{u}_\varepsilon$ is uniformly bounded in $\LGspace$. Now, from inequality \eqref{ineq:conjugateP-W} we deduce that  $\widetilde{u}_\varepsilon$ is uniformly bounded in $\Lpspace[\infty]$. Therefore, there exists $c_3$ such that
\begin{equation*}
\int_0^T G\left(\Lambda \widetilde{u}_\varepsilon\right)dt\leq c_3.
\end{equation*}
Thus, using \eqref{eq:ue-ve-relation} and Theorem \ref{thm:LowBoundQuadraticForm}, we have \begin{equation}\label{eq:lower-bound-omega-eps}
\int_0^T\inner{J\dot{u}_\varepsilon}{u_\varepsilon}dt= \int_0^T\inner{-\dot{v}_\varepsilon}{Jv_\varepsilon+\overline{u}_\varepsilon}dt\geq -C_{ G^\star} \int_0^T G^\star\left( T\dot{v}_\varepsilon \right)-C_1\geq -c_4.
\end{equation}

The convexity of $\mathcal{H}(t,\cdot)$, inequality \eqref{ineq:G_by_gradientG},
\ref{it:hip2}, \eqref{eq:hamiltonian:existence:perturbedproblem} and
the fact that $\inner{u}{\nabla G(u)}\geq 0$ for any $u\in\R^{2n}$, imply
\begin{equation*}
  \begin{split}
      2\mathcal{H}\left(t,\frac{\overline{u}_{\varepsilon}(t)}{2}\right)
      &\leq
      \mathcal{H}(t,u_{\varepsilon})
      +
      \mathcal{H}(t,-\widetilde{u}_{\varepsilon})
       \\
      &\leq
      \inner{\nabla \mathcal{H}(t,u_{\varepsilon}(t))}{u_{\varepsilon}}
      +
      \mathcal{H}(t,0)+ G(\Lambda \widetilde{u}_\varepsilon)+\alpha(t)\\
       &=
      \inner{-J\dot{u}_\varepsilon-\varepsilon\nabla G(\varepsilon u_\varepsilon)}{u_{\varepsilon}}
      +
      \mathcal{H}(t,0)+ G(\Lambda \widetilde{u}_\varepsilon)+\alpha(t)\\
      &\leq
      \inner{-J\dot{u}_\varepsilon}{u_{\varepsilon}}
      +
      \mathcal{H}(t,0)+ G(\Lambda \widetilde{u}_\varepsilon)+\alpha(t).\\
    \end{split}
\end{equation*}
Integrating the previous inequality and using \eqref{eq:lower-bound-omega-eps}, it follows that
\begin{equation*}
\int_0^T \mathcal{H}\left(t,\frac{\overline{u}_{\varepsilon}}{2}\right) \,dt \leq c_5.
\end{equation*}
Now, by \ref{it:hip3} we have that $\overline{u}_{\varepsilon}$ is uniformly bounded. Thus, we have that $u_\varepsilon$ is uniformly bounded in $\WLGspace([0,T],\R^{2n})$.

\noindent \textbf{Step 3:} By a  standard  argument (see \cite{MA2017}), we can suppose that there exists a sequence $\varepsilon_n$ such that $u_n:=u_{\varepsilon_n}$ converges uniformly to a continuous function $u\in \WLGspace([0,T],\R^N)$ and  that $\dot{u}_n$ converges to $\dot{u}$ in the weak$^\star$ topology of $\LGspace([0,T],\R^N)$.
From \eqref{eq:hamiltonian:existence:perturbedproblem} in integrated form
\begin{equation*}
J u_{n}(t)-J u_{n}(0)=
- \int_0^t
\varepsilon_n \nabla G(\varepsilon_n u_{n})
+\nabla \Hcal(t,u_{n})
\,dt,
\end{equation*}
we deduce $u$ is a solution of the original problem.

It remains to prove that $v$ minimizes the dual action integral. Since $\dot{v}_n=\nabla \Hcal(t,u_n)$, we have
\begin{equation*}
  \begin{split}
      \chi_{\varepsilon_n}\left(v_{\varepsilon_n}\right) &=
       \int_0^T
            \left[\frac12\inner{J\dot{v}_n}{v_n}+\inner{u_n}{\dot{v}_n}-\Hcal_{\varepsilon_n}(t,u_n) \right]\\
      &=\int_0^T\left[\frac12\inner{J\dot{v}_n}{v_n}+\inner{u_n}{\dot{v}_n}-\Hcal(t,u_n)-G\left(\varepsilon_n u_n\right) \right]\,dt.\\
   \end{split}
\end{equation*}
Taking into account that   $\dot{v}_n\overset{\star}{\rightharpoonup} \dot{v}$ in $\LGspace[ G^\star]$ and $u_n \to u$ uniformly, we obtain
\begin{equation*}
  \begin{split}
    \lim_{n\to\infty} \chi_{\varepsilon_n}\left(v_{\varepsilon_n}\right) &=
        \lim_{n\to\infty} \int_0^T\left[\frac12\inner{J\dot{v}_n}{v_n}+\inner{u_n}{\dot{v}_n}-\Hcal(t,u_n)-G\left(\varepsilon_n u_n\right) \right]\,dt\\
    &= \int_0^T\left[\frac12\inner{J\dot{v}}{v}+\inner{u}{\dot{v}}-\Hcal(t,u)\right]\,dt.\\
  \end{split}
\end{equation*}
Now,  \eqref{eq:ue-ve-relation} implies that $v=-J(u-\overline{u})$.
Thus, using  \eqref{eq:hamiltonian} we get $\dot{v}=-J\dot{u}=\nabla\Hcal(t,u)$. Consequently,
\begin{equation*}
  \begin{split}
    \lim_{n\to\infty} \chi_{\varepsilon_n}\left(v_{\varepsilon_n}\right) &=
        \int_0^T\left[\frac12\inner{J\dot{v}}{v}+\Hcal^\star(t,\dot{v})\right]dt=\chi(v).
  \end{split}
\end{equation*}
On the other hand, from $\Hcal_\varepsilon^\star\leq \Hcalast$ we  have that for any $w\in\WLGTspace[ G^\star]([0,T],\R^{2n})$, $\chi_{\varepsilon_n}\left(v_{\varepsilon_n}\right)\leq \chi_{\varepsilon_n}\left(w\right) \leq  \chi\left(w\right)$. Therefore, $v$ is  a minimum of $\chi$.
\end{proof}

In the case where $G(x)=|x|^2/2$, in  \cite[Theorem 3.1]{mawhin2010critical}  it is assumed that constant $\Lambda<\sqrt{2\pi/T}$. Meanwhile, in \ref{it:hip2} we are assuming that $\Lambda<\min\{1/T,2\pi\}$, i.e. when  $G(x)=|x|^2/2$ our constant $\Lambda$ is not as good as constant in  \cite[Theorem 3.1]{mawhin2010critical}.  Assuming additional hypothesis on the G-function $G$, we are able to obtain better estimates for the constant $\Lambda$.

First, we recall some definitions from \cite[Chapter 11]{Maligranda}. In that monograph, it were considered  a G-function such that $ G\colon\R\to [0,+\infty)$. However, all definitions and results remains true in the anisotropic setting.

We denote by $\alpha_{ G}$ and $\beta_{ G}$ the so-called  \emph{Matuszewska-Orlicz indices} of the function $ G$, which are defined by
\begin{equation}\label{MO_indices}
    \alpha_{ G}:=\lim\limits_{t\to 0^{+}}\frac{\log \left (\sup\limits_{u\neq 0}\frac{ G(t u)}{ G(u)} \right ) }{\log(t)},\quad
    \beta_{ G}:=\lim\limits_{t\to +\infty}\frac{\log \left  (\sup\limits_{u\neq 0}\frac{ G(t u)}{ G(u)}\right )}{\log(t)}.
\end{equation}
We have that $0\leq \alpha_{ G}\leq \beta_{ G}\leq +\infty$.
The relation $\beta_{ G}<\infty$ holds if and only if $ G$ satisfies the $\Delta_2$-condition globally. On the other hand, $\alpha_{ G}>1$ if and only if $ G^\star$ satisfies the $\Delta_2$-condition  globally.

In the case that  $ G$  and $ G^\star$ satisfy the $\Delta_2$-condition globally,  for every $\epsilon>0$ there exists a
constant $K=K( G,\epsilon)$ such that, for every $t,u\geq 0$,
\begin{equation}\label{delta2-potencias}
    K_{ G,\varepsilon}^{-1}\min\big\{t^{\beta_{ G}+\epsilon},t^{\alpha_{ G}-\epsilon} \big\} G(u)\leq  G(t u)\leq
    K_{ G,\varepsilon}\max\big\{t^{\beta_{ G}+\epsilon},t^{\alpha_{ G}-\epsilon} \big\} G(u).
\end{equation}

\begin{proposition}\label{prop:optimal_lambda} The conclusions of Theorem
\ref{thm:solution-ham} continue to be true if  we suppose that $ G$ and $ G^\star$ satisfy the  $\Delta_2$-condition globally and instead of inequality  $\Lambda^{-1}> T\max\{1,C_{ G^\star}(T)/2\}$ in \ref{it:hip2}, we  assume that
\begin{equation}\label{eq:lambda_opt}
  K_{ G,\varepsilon}^{-1}\min\big\{(T\Lambda)^{-\beta_{ G}-\epsilon},(T\Lambda)^{-\alpha_{ G}+\epsilon} \big\}\geq \frac{C_{ G^\star}(T)}{2},
\end{equation}
 where the constant $K_{ G,\varepsilon}, \alpha_{ G}, \beta_{ G}$ satisfy \eqref{delta2-potencias}.
\end{proposition}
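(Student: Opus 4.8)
The plan rests on a simple reduction. In the proof of Theorem~\ref{thm:solution-ham} the hypothesis on $\Lambda$ is used at exactly one point: to guarantee that the coefficient $C_\chi$ of $\int_0^T G^\star(T\dot v)\,dt$ in the coercivity estimate \eqref{eq:bound_coercivity} is strictly positive. Once $\chi_\varepsilon$ is known to be coercive on $\WTLGtildespace[G^\star]([0,T],\R^{2n})$, the construction of the minimizer $v_\varepsilon$, the a priori bounds of Step~2, and the limiting argument of Step~3 never refer to the numerical value of $\Lambda$ again. Thus it suffices to re-examine Step~1 and show that, under the assumption that $G$ and $G^\star$ are globally $\Delta_2$, condition \eqref{eq:lambda_opt} still forces $C_\chi>0$.

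The factor $1/\bigl(T(1+r)\Lambda\bigr)$ in \eqref{eq:bound_coercivity} originates \emph{only} from the elementary bound
\begin{equation*}
\int_0^T G^\star\!\left(\frac{\dot v}{(1+r)\Lambda}\right)dt\ \geq\ \frac{1}{T(1+r)\Lambda}\int_0^T G^\star(T\dot v)\,dt ,
\end{equation*}
which follows from Proposition~\ref{prop:elem_ine} and uses nothing beyond convexity. I would replace it by the sharper power-type estimate \eqref{delta2-potencias}. The extra hypothesis is precisely what is needed here: $G^\star\in\Delta_2$ globally is equivalent to $\beta_{G^\star}<\infty$ and $G\in\Delta_2$ globally to $\alpha_{G^\star}>1$, so that \eqref{delta2-potencias} is available for the function $G^\star$. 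Writing $\dfrac{\dot v}{(1+r)\Lambda}=\bigl(T(1+r)\Lambda\bigr)^{-1}(T\dot v)$ and applying the lower inequality in \eqref{delta2-potencias} to $G^\star$ with scaling factor $\bigl(T(1+r)\Lambda\bigr)^{-1}$ yields, for every $\epsilon>0$,
\begin{equation*}
\int_0^T G^\star\!\left(\frac{\dot v}{(1+r)\Lambda}\right)dt\ \geq\ K^{-1}\min\Bigl\{\bigl(T(1+r)\Lambda\bigr)^{-\beta-\epsilon},\,\bigl(T(1+r)\Lambda\bigr)^{-\alpha+\epsilon}\Bigr\}\int_0^T G^\star(T\dot v)\,dt ,
\end{equation*}
where $\alpha,\beta,K$ are the indices and constant of \eqref{delta2-potencias} for $G^\star$; note that the two-sided form automatically selects the correct power according as $T(1+r)\Lambda$ is below or above $1$, so the $\max\{1,\cdot\}$ of the original hypothesis is absorbed into the minimum.

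Carrying this through the derivation of \eqref{eq:bound_coercivity} replaces the coercivity coefficient by
\begin{equation*}
C_\chi=K^{-1}\min\Bigl\{\bigl(T(1+r)\Lambda\bigr)^{-\beta-\epsilon},\,\bigl(T(1+r)\Lambda\bigr)^{-\alpha+\epsilon}\Bigr\}-\frac{C_{G^\star}(T)}{2},
\end{equation*}
and \eqref{eq:lambda_opt} says exactly that the first term dominates $C_{G^\star}(T)/2$ in the limit $r\to0$. Fixing an $\epsilon$ for which \eqref{eq:lambda_opt} holds and then choosing $r>0$ and $\varepsilon<r\Lambda$ small, the quantity above remains positive by continuity in $r$; with $C_\chi>0$ the estimate \eqref{eq:bound_coercivity} delivers coercivity of $\chi_\varepsilon$, and the remainder of the proof of Theorem~\ref{thm:solution-ham} transfers verbatim.

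The genuinely delicate point is the borderline character of \eqref{eq:lambda_opt}. It is a \emph{non-strict} inequality evaluated at $r=0$, while coercivity needs $C_\chi>0$ for an \emph{actual} $r>0$; moreover $r\mapsto K^{-1}\min\{\,\cdot\,\}$ decreases as $r$ grows (because $T(1+r)\Lambda$ increases and each exponent is negative), so the perturbation works against us. One must therefore exhibit real slack — either from a strict inequality in \eqref{eq:lambda_opt}, in which case continuity in $r$ suffices, or, in the equality case, from optimizing the free parameter $\epsilon$ while keeping track of the dependence $K=K_{G^\star,\epsilon}$. Establishing this slack, and checking that the power-type factor indeed improves on the linear one, is the only new ingredient; all the analytic work has already been done in the proof of Theorem~\ref{thm:solution-ham}.
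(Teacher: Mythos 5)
Your proposal is correct and takes essentially the same approach as the paper: the paper's proof likewise modifies only Step 1, choosing $0<r<1$ so that the $(1+r)$-version of \eqref{eq:lambda_opt} holds and then using \eqref{delta2-potencias} to bound $\int_0^T G^{\star}\left(\frac{\dot{v}}{(1+r)\Lambda}\right)dt$ from below by $K_{G,\varepsilon}^{-1}\min\left\{[(1+r)T\Lambda]^{-\beta_{G}-\epsilon},[(1+r)T\Lambda]^{-\alpha_{G}+\epsilon}\right\}\int_0^T G^{\star}(T\dot{v})\,dt$, after which the coercivity constant $C_{\chi}$ is positive and the rest of the proof of Theorem \ref{thm:solution-ham} transfers verbatim. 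The delicate point you flag is genuine but is present in the paper as well: its proof simply asserts that such an $r$ exists, which tacitly requires slack in the non-strict inequality \eqref{eq:lambda_opt} (in the equality case no such $r$ exists, since the left-hand side strictly decreases in $r$), and your additional observation that the estimate is really being applied to $G^{\star}$, so the relevant indices and constant are those of $G^{\star}$ rather than $G$, makes your version, if anything, more careful than the published argument.
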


\begin{proof} The only change that must be made in the proof of Theorem  \ref{thm:solution-ham} is choosing  $0<r<1$ such that
 \begin{equation*}
  K_{ G,\varepsilon}^{-1}\min\left\{[(1+r)T\Lambda]^{-\beta_{ G}-\epsilon},[(1+r)T\Lambda]^{-\alpha_{ G}+\epsilon} \right\}\geq \frac{C_{ G^\star}(T)}{2}.
\end{equation*}
Now, we use \eqref{eq:lambda_opt} to produce the next inequality
\begin{equation*}
\int_0^T  G^{\star}
    \left(\frac{ \dot{v}}{(1+r)\Lambda}\right)\,dt\geq K_{ G,\varepsilon}^{-1}\min\left\{[(1+r)T\Lambda]^{-\beta_{ G}-\epsilon},[(1+r)T\Lambda]^{-\alpha_{ G}+\epsilon} \right\}\int_0^T  G^{\star}
    \left(T\dot{v}\right)\,dt.
\end{equation*}
From here, the proof continues like in Theorem \ref{thm:solution-ham}.
\end{proof}

\begin{remark}
If $ G_2(u)=|u|^2/2$ then inequality \eqref{delta2-potencias} holds with  $\epsilon=0$, $K_{ G_2,\varepsilon}=1$, $\alpha_{ G_2}=\beta_{ G_2}=2$.
Since $ G_2^\star=G_2$, from \eqref{eq:CG2_constant} we have that $C_{ G_2^\star}=1/T\pi$.
Thus inequality \eqref{eq:lambda_opt} is equivalent to $\Lambda\leq \sqrt{2\pi/T}$, which is the same constant as in \cite[Theorem 3.1]{mawhin2010critical}. On the other hand, in Theorem \ref{thm:solution-ham} we assume that  $\xi\in \LGspace[2]$ and $\alpha\in \LGspace[1]$.  Meanwhile in order to apply \cite[Theorem 3.1]{mawhin2010critical} we need  $\xi\in \LGspace[4]$ and $\alpha\in \LGspace[2]$.
\end{remark}

\begin{remark}
\label{rem:comp_tian}
Let us discuss the relation between Proposition \ref{prop:optimal_lambda} and the result obtained in  \cite{TiaGe07}. Recall that they consider Hamiltonian $\Hcal$ given by \eqref{eq:hamil_tian} and satisfying \eqref{eq:A1} and \eqref{eq:A2}.

% With respect of condition \ref{it:hip1} in our Theorem \ref{thm:solution-ham} we emphasize that
If $\xi(t)=(l_1(t),l_2(t))$ is a function satisfying
\begin{equation*}
 \inner{\xi(t)}{u}\leq \frac{1}{a}F(t,u_1)+\frac{a^{q-1}}{q}|u_2|^q,
\end{equation*}
for any $u\in\R^{2n}$, then taking $u=(0,u_2)$ we have that $\inner{l_2(t)}{u_2}\leq\frac{a^{q-1}}{q}|u_2|^q$ and this inequality is true only for $l_2\equiv 0$. Consequently \ref{it:hip1} implies
\begin{equation*}
 \inner{l_1(t)}{u_1}\leq \frac{1}{a}F(t,u_1),
\end{equation*}
and $l_1\in \LGspace[q]$ (recall that $G^\star_p(l_1,l_2)=|l_1|^q/q+|l_2|^p/p$). Therefore our condition \ref{it:hip1} differs slightly from \eqref{eq:A1}.

The condition \eqref{eq:A2} for $\Hcal$  implies
\begin{equation*}
\Hcal(t,u_1,u_2)\leq \frac{a}{p}|u_1|^p+\frac{a^{q-1}}{q}|u_2|^{q}+\frac{\gamma(t)}{a}=G_p(\Lambda u_1,\Lambda u_2)+\alpha(t),
\end{equation*}
where $G_p(u_1,u_2)=|u_1|^p/p+|u_2|^q/q$, $\Lambda=a^{1/p}$ and $\alpha(t)=\gamma(t)/a$. The  inequality $0<a <\min\{T^{-\frac{p}{q}}, T^{-1}\}$ shows that condition \eqref{eq:A2} in \cite{TiaGe07} implies
\begin{equation*}%\label{eq:a2_tian}
\Lambda<\min\{T^{-\frac{1}{q}}, T^{-\frac{1}{p}}\}.
\end{equation*}
On the other hand, it is easy to see that inequality \eqref{delta2-potencias} holds with $K_{ G_p,\varepsilon}=1$, $\alpha_G=\min\{p,q\}$, $\beta_G=\max\{p,q\}$ and $\varepsilon=0$.  Therefore, the fact that $C_{G^\star}(T)=C_{G^\star}(1)/T=C_{G^\star}/T$ implies that $\Lambda$ satisfies inequality \eqref{eq:lambda_opt} if and only if
\begin{equation*}%\label{eq:a2_tian}
\Lambda < \min\left\{\left(\frac{2}{C_{G^\star}}\right)^{1/p} T^{-1/q},\left(\frac{2}{C_{G^\star}}\right)^{1/q} T^{-1/p}\right\}.                                                                                                                                                                                              \end{equation*}
Recalling that for $G$ symplectic we have $C_{G^\star}\leq 2$, we obtain that the condition  \eqref{eq:A2} implies our condition \ref{it:hip2}. We suspect that the estimate $C_{G^\star}\leq 2$ is not the best possible (it is evident when $n=1$).
\end{remark}

\begin{remark}
\label{rem:condH1'}
To finish this section let us give a condition that contains conditions \eqref{eq:A1} and \ref{it:hip1}  as particular cases. %As it is somewhat technical we remain it to the present comment.
Concretely, Theorem \ref{thm:solution-ham} remains true if we replace \ref{it:hip1}  by the following condition.
\begin{description}
 \item[($H1'$)]There exist $b\in \LGspace[1]([0,T],\R)$, a $G$-function $G_0:\R^{2n}\to[0,\infty)$, $\xi\in \EGspace[G^\star_0]$ and a   map $f:\R^{2n}\to\R^{2n}$ such that
 \begin{equation*}
  H(t,u)\geq \inner{\xi(t)}{f(u)}+b(t),\quad u\in\R^{2n},t\in[0,T]
 \end{equation*}
 and $f$ satisfies that for every $\epsilon>0$ there exists $\delta>0$ such that
 \begin{equation*}
  G_0(\delta f(u))\leq G(\epsilon u),\quad u\in\R^{2n}
 \end{equation*}
\end{description}
Note that  condition \eqref{eq:A1} for Hamiltonian \eqref{eq:hamil_tian} is obtained choosing $f(u_1,u_2)=(|u_1|^{\frac{p-2}{2}}u_1,0)$, $\xi(t)=(l(t),0)$, $b\equiv 0$ and $G_0(u)=|u|^2$ in condition ($H1'$) and finally taking $u_2=0$.
Hence we obtain existence when $l\in \Lpspace[2]$, while in \cite{TiaGe07} it is assumed $l\in\Lpspace[2\max\{p,q-1\}]$.
\end{remark}

%%%%%%%%%%%%%%%%%%%%%%%%%%%%%%%%%%%%%%%%%%%%%%%%%%%%%%%%%%%%%%%%%%%%%%%%%%%%%%%%
%                                                                              %
%         existence for second order                                           %
%                                                                              %
%%%%%%%%%%%%%%%%%%%%%%%%%%%%%%%%%%%%%%%%%%%%%%%%%%%%%%%%%%%%%%%%%%%%%%%%%%%%%%%%

\section{Application to the existence of solutions of second order systems}
\label{sec:applic}

The purpose of this section is to apply the previous results to get existence of solutions
of the second-order system
\begin{equation}\label{eq:EL}
\begin{cases}
    \frac{d}{dt}\nabla \Phi(\dot{q})+\nabla V(t,q)=0\quad \text{for a.e. }t\in [0,T] \\
    q(0)=q(T)   ,\quad  \dot{q}(0)=\dot{q}(T),
\end{cases}\tag{EL}
\end{equation}
where $\Phi\colon \R^N\to \R$ is a G-function in $\Gamma(\R^N)$ such that $\Phi$ and $ \Phi^\star$ satisfy $\Delta_2$ condition and $V\colon [0,T]\times \R^N\to \R$, $(t,q)\mapsto V(t,q)$ is a  Carath\'eodory function continuously differentiable and convex in $q$.

\begin{theorem}
\label{thm:main}
Assume that the following conditions are satisfied:
\begin{enumerate}[label=($V_\arabic*$),series=V]
\item
\label{asm:V:lowerbound}
there exists $l\in \LGspace[\Phi]([0,T],\R^N)$ such that for all $q\in \R^N$ and
a.e. $t\in[0,T]$, such that
\begin{equation*}
\inner{l(t)}{q}\leq V(t,q);
\end{equation*}
\item
\label{asm:V:upperbound}
for all $x\in \R^N$ and a.e. $t\in[0,T]$ one has
\begin{equation*}
V(t,q)\leq \Phi\left(\Lambda^2 q\right)+\gamma(t);
\end{equation*}
where $\Lambda^{-1}>T\max\{1,C_{ G}/2\}$.

\item
\label{asm:V:coercivity}
\begin{equation*}
\lim_{x\to \infty} \int_0^T V(t,x) \,dt=\infty.
\end{equation*}
\end{enumerate}
where $C_{ G}=C_{ G}(T)$ denotes constant corresponding to the G-function $G(q,p)=\Phi(q)+\Phi^\star(p)$. Then the problem \eqref{eq:EL} has at least one solution.
\end{theorem}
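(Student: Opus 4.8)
The plan is to recast \eqref{eq:EL} as a first order Hamiltonian system and then invoke Theorem~\ref{thm:solution-ham}. Writing $u=(u_1,u_2)\in\R^N\times\R^N$ and setting
\[
\Hcal(t,u_1,u_2)=V(t,u_1)+\Phi^\star(u_2),
\]
the Hamilton equations $\dot u=J\nabla\Hcal(t,u)$ become $\dot u_1=\nabla\Phi^\star(u_2)$ and $\dot u_2=-\nabla V(t,u_1)$. The first gives $u_2=\nabla\Phi(\dot u_1)$ (recall $\nabla\Phi^\star=(\nabla\Phi)^{-1}$), so the second reads $\frac{d}{dt}\nabla\Phi(\dot u_1)+\nabla V(t,u_1)=0$; together with the periodicity $u(0)=u(T)$ and the injectivity of $\nabla\Phi$ this is exactly \eqref{eq:EL} with $q=u_1$. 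Thus any $T$-periodic solution of the Hamiltonian system furnishes a solution of \eqref{eq:EL}, and it remains to verify the hypotheses of Theorem~\ref{thm:solution-ham}.

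The heart of the proof is matching the growth condition, and this is where the exponent $\Lambda^2$ in \ref{asm:V:upperbound} is used. I would \emph{not} work with $G(u_1,u_2)=\Phi(u_1)+\Phi^\star(u_2)$ directly: although it is symplectic, since $G^\star(Ju)=\Phi(u_1)+\Phi^\star(u_2)=G(u)$, its momentum block $\Phi^\star(u_2)$ cannot be dominated by $\Phi^\star(\Lambda u_2)$ when $\Lambda<1$. Instead I would introduce the symplectic scaling matrix $A=\operatorname{diag}(\Lambda I,\Lambda^{-1}I)$, which satisfies $A^{-T}J=JA$, and use
\[
\mathcal{G}(u_1,u_2):=G(Au)=\Phi(\Lambda u_1)+\Phi^\star(\Lambda^{-1}u_2).
\]
By the example on symplecticity of $G(A\,\cdot)$, the function $\mathcal{G}$ is again symplectic; moreover the substitution $w=Au$ in \eqref{eq:def-optimal-constant} together with $A^{-T}JA^{-1}=J$ shows $C_{\mathcal{G}}(T)=C_{G}(T)$, and $C_{\mathcal{G}^\star}(T)=C_{\mathcal{G}}(T)=C_{G}(T)$ since $\mathcal{G}$ is symplectic. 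Because $\Phi\in\Gamma(\R^N)$ we have $\mathcal{G}\in\Gamma(\R^{2N})$, and because $\Phi,\Phi^\star\in\Delta_2$ we have $\mathcal{G}^\star\in\Delta_2$, while $\mathcal{G}^\star$ is semi-symplectic; these are precisely the structural hypotheses of Theorem~\ref{thm:solution-ham}. With this choice, \ref{asm:V:upperbound} gives
\[
\Hcal(t,u)\le \Phi(\Lambda^2 u_1)+\Phi^\star(u_2)+\gamma(t)=\mathcal{G}(\Lambda u)+\gamma(t),
\]
so \ref{it:hip2} holds with the same $\Lambda$, and its requirement $\Lambda^{-1}>T\max\{1,C_{\mathcal{G}^\star}(T)/2\}$ is exactly \ref{asm:V:upperbound}.

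For the remaining hypotheses I would argue as follows. Combining \ref{asm:V:lowerbound} and \ref{asm:V:upperbound} gives $\inner{l(t)}{q}\le \Phi(\Lambda^2 q)+\gamma(t)$ for all $q$; taking the supremum over $q$ yields $\Phi^\star(l(t)/\Lambda^2)\le\gamma(t)$, whence $l\in\LGspace[\Phi^\star]$. A direct computation gives $\mathcal{G}^\star(v_1,v_2)=\Phi^\star(v_1/\Lambda)+\Phi(\Lambda v_2)$, so $\xi:=(l,0)\in\LGspace[\mathcal{G}^\star]$, and since $\Phi^\star\ge0$ we have $\Hcal(t,u)\ge\inner{l(t)}{u_1}=\inner{\xi(t)}{u}$, which is \ref{it:hip1}. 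For \ref{it:hip3}, for constant $u=(u_1,u_2)$ one has $\int_0^T\Hcal(t,u)\,dt=\int_0^T V(t,u_1)\,dt+T\Phi^\star(u_2)$, where both summands are bounded below, the first tending to $+\infty$ as $|u_1|\to\infty$ by \ref{asm:V:coercivity} and the second as $|u_2|\to\infty$ by superlinearity of $\Phi^\star$, so the sum tends to $+\infty$ as $|u|\to\infty$. Finally $\Hcal(t,\cdot)$ is convex and $C^1$; the strict convexity and superlinearity in the $u_1$ direction formally needed for $\Hcal(t,\cdot)\in\Gamma(\R^{2N})$ are not guaranteed by the mere convexity of $V$, but they are harmless, since the proof of Theorem~\ref{thm:solution-ham} replaces $\Hcal$ by the perturbed Hamiltonian $\Hcal+\mathcal{G}(\varepsilon\,\cdot)\in\Gamma(\R^{2N})$, and only the limiting equation $\dot u=J\nabla\Hcal(t,u)$ is used at the end.

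With all hypotheses in place, Theorem~\ref{thm:solution-ham} produces $u\in\WLGTspace[\mathcal{G}]([0,T],\R^{2N})$ solving $\dot u=J\nabla\Hcal(t,u)$, and by the first paragraph $q=u_1$ solves \eqref{eq:EL}. The main obstacle, and the step I would treat most carefully, is the growth matching: recognizing that the factor $\Lambda^2$ forces the use of the scaled symplectic function $\mathcal{G}=G\circ A$ and checking that the optimal constant is invariant under this symplectic scaling, $C_{\mathcal{G}^\star}(T)=C_{G}(T)$, so that the reduction neither strengthens nor weakens the hypothesis on $\Lambda$.
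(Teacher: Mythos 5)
Your proof is correct, but it takes a genuinely different route from the paper's. The paper does not work with the ``natural'' Hamiltonian $V(t,u_1)+\Phi^\star(u_2)$ at all: it rescales the \emph{variables}, using the Lagrangian $L(t,q,p)=\Phi(p/\Lambda)-V(t,q/\Lambda)$, whose associated Hamiltonian is $\Hcal(t,z)=\Phi^\star(\Lambda z_2)+V(t,z_1/\Lambda)$. Then \ref{asm:V:upperbound} turns into $\Hcal(t,z)\le G(\Lambda z)+\gamma(t)$ with the \emph{unscaled} $G(z_1,z_2)=\Phi(z_1)+\Phi^\star(z_2)$, Theorem \ref{thm:solution-ham} is applied to that $G$ (using $C_{G^\star}=C_G$ by symplecticity), and the solution of \eqref{eq:EL} is recovered as $q=z_1/\Lambda$. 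You instead keep the natural Hamiltonian and rescale the \emph{comparison function}, replacing $G$ by $\mathcal{G}=G(A\,\cdot)$ with the symplectic matrix $A=\operatorname{diag}(\Lambda I,\Lambda^{-1}I)$; this requires the extra observation, not stated in the paper but correctly derived from $A^{-T}JA^{-1}=J$, that $C_{\mathcal{G}}(T)=C_G(T)$, i.e.\ that the optimal constant is invariant under symplectic linear changes of the G-function. The two reductions are dual to one another and yield identical hypotheses for Theorem \ref{thm:solution-ham}; the paper's version is more elementary (no invariance lemma needed, at the cost of undoing the change of variables at the end), while yours makes the symplectic invariance of $C_G$ explicit and avoids the final rescaling. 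Two further points where your write-up is actually more careful than the paper's: (i) condition \ref{it:hip1} needs $\xi\in\LGspace[G^\star]$, whose first block is $\Phi^\star$, so what is really required is $l\in\LGspace[\Phi^\star]$ rather than the stated $l\in\LGspace[\Phi]$; your derivation $\Phi^\star(l(t)/\Lambda^2)\le\gamma(t)$ from \ref{asm:V:lowerbound} and \ref{asm:V:upperbound} supplies exactly this and repairs the mismatch that the paper's proof silently passes over; (ii) the paper asserts without comment that $\Hcal$ satisfies the hypotheses of Theorem \ref{thm:solution-ham}, even though $\Hcal(t,\cdot)\in\Gamma(\R^{2N})$ (strict convexity, superlinearity in $u_1$) does not follow from mere convexity of $V$ --- an issue common to both reductions --- whereas you flag it and justify it via the strictly convex, superlinear perturbation $\Hcal+\mathcal{G}(\varepsilon\,\cdot)$ used inside the proof of Theorem \ref{thm:solution-ham}.
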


Our theorem is a  generalization of the classical result \cite[Theorem 3.5]{mawhin2010critical} where the authors proved that under a quadratic growth condition on $V$, there exists a periodic solution to the problem $\ddot{u}=\nabla V(t,u)$. This result was further extended by Tian and Ge (see \cite[Theorem 2.1]{TiaGe07}) to p-Laplacian setting. They assumed that $V$ has a p-power growth.

\begin{proof}
System \eqref{eq:EL} is a system of Lagrange equations for the Lagragian function $L(t,q,p)=\Phi(p)-V(t,q)$.
Alternatively,  we can use the Lagrangian function $L(t,q,p)=\Phi(p/\Lambda)-V(t,q/\Lambda)$.
Clearly,  periodic solutions of one system correspond to periodic solutions of the other one.
The associated Hamiltonian  $\Hcal\colon [0,T]\times \R^{2n}\to \R$ is given by
\begin{equation*}
\label{main:eq:hamiltonian}
\Hcal(t,z)=
 \Phi^\star \left(\Lambda z_2\right)+V \left(t,\frac{z_1}{\Lambda}\right),
\end{equation*}
where $z=(z_1,z_2)$.

For a.e. $t\in [0,T]$, the function $\Hcal(t,\cdot)$ is
convex and $C^1$.  For every $z\in \R^{2n}$
and a.e. $t\in[0,T]$,
\begin{equation*}
\label{main:ineq:hamiltonian_lower_estimate}
\mathcal{H}(t,z)\geq
 \Phi^\star\left(\Lambda z_2\right) + \frac{1}{\Lambda}\,\inner{l(t)}{z_1}_{\R^N}
\geq\frac{1}{\Lambda}\inner{(l(t),0)}{z}_{\R^{2n}}
\end{equation*}
and
\begin{equation*}
\label{main:ineq:hamiltonian_upper_estimate}
\mathcal{H}(t,z)
\leq
\Phi^\star\left(\Lambda z_2 \right) +
\Phi\left(\Lambda z_1\right)
+\gamma(t)
=
G(\Lambda z)+ \gamma(t).
\end{equation*}

Moreover,
\begin{equation*}
    \label{main:hamiltonian_coercive}
    \int_0^T \mathcal{H}(t,z) \,dt
    =
       \Phi^\star\left(\Lambda z_2\right) T
    +
      \int_0^T  V\left(t,\frac{z_1}{\Lambda}\right) \,dt \to \infty,
    \quad z\to \infty
\end{equation*}
Hamiltonian $\Hcal$ satisfies assumptions of Theorem \ref{thm:solution-ham}. Hence, the corresponding Hamiltonian system with periodic boundary conditions has a solution $z\in \WLGTspace([0,T],\R^{2n})$. Consequently, $u=z_1/\Lambda$ is a solution of \eqref{eq:EL}. Since $\dot{u} =  \nabla \Phi^\star(z_2/\alpha)$ and $z_2\in\LGspace[\Phi^\star]([0,T],\R^N)$, then $u\in \WLGTspace[\Phi]([0,T],\R^N)$. This finishes the proof.

\end{proof}

%%%%%%%%%%%%%%%%%%%%%%%%%%%%%%%%%%%%%%%%%%%%%%%%%%%%%%%%%%%%%%%%%%%%%%%%%%%%%%%%
%                                                                              %
%          Bibliography                                                        %
%                                                                              %
%%%%%%%%%%%%%%%%%%%%%%%%%%%%%%%%%%%%%%%%%%%%%%%%%%%%%%%%%%%%%%%%%%%%%%%%%%%%%%%%

\bibliographystyle{plain}
\bibliography{biblio}

\end{document}